\def\hR{\hat{R}}
\def\hRm{\hat{R_{\m}}}
\def\hRn{\hat{R_{\n}}}
\def\hRM{\hat{R}^{(J_M)}}
\def\hRTT{\hat{R}_{\TT}}
\def\hRJJ{\hat{R}_{\JJ}}
\def\ModR{\mathrm{Mod}(R)}
\def\modR{\mathrm{mod}(R)}
\def\ArtR{\mathrm{Art}(R)}
\def\WideR{\mathrm{Wid}_{R}}
\def\WideRR{\mathrm{Wid}_{\hat{R}}}
\def\Mod{\mathrm{Mod}}
\def\mod{\mathrm{mod}}
\def\Art{\mathrm{Art}}
\def\Wide{\mathrm{Wid}}
\def\subext{\mathrm{subext}}
\def\quotext{\mathrm{quotext}}
\def\Hom{\mathrm{Hom}}
\def\Ker{\mathrm{Ker}}
\def\Spec{\mathrm{Spec}}
\def\SpecR{\mathrm{Spec}\ R}
\def\Ass{\mathrm{Ass}}
\def\AssR{\mathrm{Ass}_{R}}
\def\Att{\mathrm{Att}}
\def\AttR{\mathrm{Att}_{R}}
\def\ann{\mathrm{ann}}
\def\Soc{\mathrm{Soc}}
\def\N{\mathbb N}
\def\m{\mathfrak m}
\def\n{\mathfrak n}
\def\l{\mathfrak l}
\def\p{\mathfrak p}
\def\P{\mathfrak P}
\def\q{\mathfrak q}
\def\Q{\mathfrak Q}
\def\TT{\mathcal T}
\def\JJ{\mathcal J}
\def\J{\mathcal J}
\def\X{\mathcal{X}}
\theoremstyle{plain} 
\newtheorem{theorem}{\textbf Theorem}[section]
\newtheorem{lemma}[theorem]{\textbf Lemma}
\newtheorem{corollary}[theorem]{\textbf Corollary}
\newtheorem{proposition}[theorem]{\textbf Proposition}
\theoremstyle{definition}
\newtheorem{definition}[theorem]{\textbf Definition}
\newtheorem{remark}[theorem]{\textbf Remark}
\numberwithin{equation}{section}
\begin{document}
\title[Remarks on subcategories of artinian modules]{\textbf{Remarks on subcategories of artinian modules}}
\author{Naoya Hiramatsu} 
\address{Department of general education, Kure National College of Technology, 2-2-11 Agaminami, Kure, Hiroshima, 737-8506 JAPAN}
\email{hiramatsu@kure-nct.ac.jp}
\subjclass[2000]{13C05, 16D90, 13J10}
\date{\today}
\keywords{artinian module, wide subcategory, Serre subcategory}
\begin{abstract}
We study two subcategories of the category of artinian modules, a wide subcategory and a Serre subcategory. 
We prove that all wide subcategories of artinian modules are Serre subcategories. 
We also provide the bijection between the set of Serre subcategories and the set of specialization closed subsets of the set of closed prime ideals of some completed ring. 
These results are artinian analogues of the theorems proved in \cite{T08}.    
\end{abstract}
\maketitle
\section{Introduction}

Classification theory of subcategories has been studied by many authors in many areas \cite{G62, H85, N92, Th97, Ho01, T08, K08}. 
In 1990's, Hopkins \cite{H85} and Neeman \cite{N92} classify thick subcategories of the derived categories of perfect complexes in terms of the ring spectra. 
Thomason \cite{Th97} generalizes this result to quasi-compact and quasi-separated schemes.
Now the classification theorem by them is known as the Hopkins-Neeman-Thomason theorem.  

Let us recall the definitions of several subcategories of an abelian category. 
We say that a full subcategory is wide if it is closed under kernels, cokernels and extensions. 
A Serre subcategory is defined to be a wide subcategory which is closed under subobjects.  
Let $R$ be a commutative noetherian ring and $M$ be an $R$-module. 
We denote by $\ModR$ the category of $R$-modules and $R$-homomorphisms and by $\modR$ the full subcategory consisting of finitely generated $R$-modules. 
We also denote by $\SpecR$ the set of prime ideals of $R$ and by $\Ass _R M$ the set of associated prime ideals of $M$.

Classifying subcategories of a module category also has been studied by many authors. 
Classically, Gabriel \cite{G62} gives a bijection between the set of Serre subcategories of $\modR$ and the set of specialization closed subsets of $\SpecR$. 
Recently, the following result was proved by Takahashi \cite{T08} and Krause \cite{K08}.
Takahashi \cite{T08} first proved the theorem and Krause \cite{K08} generalized it to the category of arbitrary modules which is closed under submodules, extensions and direct unions.

\begin{theorem}\cite[Theorem 4.1]{T08}\cite[Corollary 2.6]{K08}\label{Theorem B}
Let $R$ be a noetherian ring. 
Then we have the following 1-1 correspondences;
{\small 
$$
\begin{array}{ccc}
\begin{CD}
&\left\{ 
\begin{array}{c}
\text{subcategories of}\ \modR \ \text{closed under} \\ 
\text{submodules and extensions}\ 
\end{array}
\right\}
&\begin{matrix}
@>{\Psi }>> \\ @<<{\Phi }<
\end{matrix}
&\left\{ 
\begin{array}{c}
\text{subsets of}\ \SpecR 
\end{array}
\right\} \\
&\begin{matrix}
@AA{\subseteq}A 
\end{matrix}
&
&\begin{matrix}
@AA{\subseteq}A 
\end{matrix}
\\
&\left\{ 
\begin{array}{c}
\text{Serre subcategories}\\ 
\text{of}\ \modR 
\end{array}
\right\}
&\begin{matrix}
@>{\Psi }>> \\ @<<{\Phi }<
\end{matrix}
&\left\{ 
\begin{array}{c}
\text{specialization closed}\\
\text{subsets of}\ \SpecR 
\end{array}
\right\}
\end{CD}
\end{array}
$$
}
where $\Psi ({\mathcal M}) = \cup _{M \in {\mathcal M}} \Ass_{R} M$ and $\Phi ({\mathcal S}) = \{ M \in \mod | \Ass _{R} M \subseteq {\mathcal S} \}$. 
\end{theorem}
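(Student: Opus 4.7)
The plan is to first verify the upper bijection (for subcategories closed under submodules and extensions) and then restrict it to Serre subcategories by characterizing $\Psi(\mathcal{M})$ being specialization closed in terms of $\mathcal{M}$ being Serre. Closure properties of the maps are routine: for $S \subseteq \Spec R$, the subcategory $\Phi(S)$ is closed under submodules (because $\Ass_{R}$ is monotone under inclusion) and under extensions (because $\Ass_{R} B \subseteq \Ass_{R} A \cup \Ass_{R} C$ for a short exact sequence $0 \to A \to B \to C \to 0$). The identity $\Psi(\Phi(S)) = S$ is checked by both inclusions; the nontrivial direction uses $R/\p \in \Phi(S)$ for each $\p \in S$ together with $\Ass_{R}(R/\p) = \{\p\}$. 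The inclusion $\mathcal{M} \subseteq \Phi(\Psi(\mathcal{M}))$ is immediate from the definitions.

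The main content is the reverse inclusion $\Phi(\Psi(\mathcal{M})) \subseteq \mathcal{M}$. Given $N \in \mod R$ with $\Ass_{R} N \subseteq \Psi(\mathcal{M})$, I invoke a primary decomposition $0 = \bigcap_{i} Q_{i}$ inside $N$, where $N/Q_{i}$ is $\p_{i}$-primary with $\p_{i} \in \Ass_{R} N$. The canonical injection $N \hookrightarrow \bigoplus_{i} N/Q_{i}$, together with closure under submodules and under finite direct sums (via extensions), reduces the task to showing each $\p_{i}$-primary quotient $N/Q_{i}$ lies in $\mathcal{M}$. The key lemma is: if $\mathcal{M}$ is closed under submodules and extensions and $R/\p \in \mathcal{M}$, then every finitely generated $\p$-primary module $T$ lies in $\mathcal{M}$.

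To prove this lemma, note that $T$ embeds into its localization $T_{\p}$ (because $\p$-primariness forces every $s \in R \setminus \p$ to act injectively on $T$), and $T_{\p}$ is a finite-length $R_{\p}$-module with composition factors $\kappa(\p) = R_{\p}/\p R_{\p}$. Choosing a composition series $0 = V_{0} \subsetneq V_{1} \subsetneq \cdots \subsetneq V_{n} = T_{\p}$ and setting $T_{i} := T \cap V_{i}$ yields a filtration $0 = T_{0} \subseteq T_{1} \subseteq \cdots \subseteq T_{n} = T$. Each subquotient $T_{i}/T_{i-1}$ injects into $V_{i}/V_{i-1} \cong \kappa(\p)$, so it is a finitely generated $R/\p$-submodule of the fraction field of $R/\p$, hence isomorphic as an $R$-module to some ideal of $R/\p$, which in turn is a submodule of $R/\p \in \mathcal{M}$. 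Closure under submodules gives $T_{i}/T_{i-1} \in \mathcal{M}$, and closure under extensions then delivers $T \in \mathcal{M}$.

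For the second bijection, I show $\mathcal{M}$ is Serre exactly when $\Psi(\mathcal{M})$ is specialization closed. If $\mathcal{M}$ is Serre and $\p \in \Psi(\mathcal{M})$, then $R/\p \in \mathcal{M}$ by submodule closure; for $\q \supseteq \p$, the module $R/\q$ is a quotient of $R/\p$, hence lies in $\mathcal{M}$, so $\q \in \Psi(\mathcal{M})$. Conversely, assume $\Psi(\mathcal{M})$ is specialization closed and $N \in \mathcal{M}$. For any quotient $N/L$, each $\q \in \Ass_{R}(N/L) \subseteq \Supp_{R}(N/L) \subseteq \Supp_{R} N = \bigcup_{\p \in \Ass_{R} N} V(\p)$ satisfies $\q \supseteq \p$ for some $\p \in \Ass_{R} N \subseteq \Psi(\mathcal{M})$; specialization closure gives $\q \in \Psi(\mathcal{M})$, so $N/L \in \Phi(\Psi(\mathcal{M})) = \mathcal{M}$ by the first bijection. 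The principal technical obstacle throughout is the $\p$-primary lemma of the third paragraph; every other step is a formal manipulation of associated primes, primary decompositions, and the assumed closure properties.
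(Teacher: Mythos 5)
Your proof is correct. Note that the paper does not supply its own argument for this statement: it is quoted verbatim from Takahashi \cite[Theorem 4.1]{T08} and Krause \cite[Corollary 2.6]{K08}, and the author simply invokes it (via Theorem \ref{Krause}, which is the module-level version of the same fact) when proving Theorem \ref{theorem 1} and Theorem \ref{classification}. So there is no in-paper proof to compare against; the comparison has to be with the cited sources.

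Your route is essentially Takahashi's. The reduction $\Phi\Psi(\mathcal M)\subseteq\mathcal M$ via the primary-decomposition embedding $N\hookrightarrow\bigoplus_i N/Q_i$, and the key lemma that a finitely generated $\p$-coprimary module $T$ lies in the subext-closure of $R/\p$, both appear in \cite{T08}. Your proof of the key lemma --- embedding $T$ into the finite-length $R_\p$-module $T_\p$, pulling back a composition series, and observing that each graded piece is a finitely generated $R/\p$-submodule of the fraction field $\kappa(\p)$, hence isomorphic to an ideal of $R/\p$ --- is the standard fractional-ideal argument and is the genuine content of the theorem; the rest is formal. One small step you use without stating it explicitly is that each $\p_i\in\Ass_R N\subseteq\Psi(\mathcal M)$ gives some $M\in\mathcal M$ with $\p_i\in\Ass_R M$, whence $R/\p_i\hookrightarrow M$ and closure under submodules puts $R/\p_i$ in $\mathcal M$, so the lemma's hypothesis is actually met; this is worth saying. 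The derivation of the Serre/specialization-closed correspondence from the first bijection is also exactly the argument in \cite{T08}. Krause's proof in \cite{K08} is different in flavor (it works in a larger category closed under direct unions and uses support-theoretic machinery), but your elementary approach is cleaner for the finitely generated setting addressed here.
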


In addition, Takahashi \cite{T08} pointed out a property concerning wide subcategories of $\modR$. 
Actually he proved the following theorem. 

\begin{theorem}\cite[Theorem 3.1, Corollary 3.2]{T08}\label{Theorem A}
Let $R$ be a noetherian ring. 
Then every wide subcategory of $\modR$ is a Serre subcategory of $\modR$.
\end{theorem}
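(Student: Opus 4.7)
The goal is to show that any wide subcategory $\mathcal{W}$ of $\modR$ is closed under submodules. The plan has three parts.

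First, I would reduce to showing closure under cyclic submodules. If $M \in \mathcal{W}$ and $N \subseteq M$ is a submodule, noetherianity of $R$ makes $N$ finitely generated, say $N = Rx_1 + \cdots + Rx_k$. If each cyclic submodule $Rx_i$ lies in $\mathcal{W}$, then $\bigoplus_i Rx_i \in \mathcal{W}$ by iterated extension closure, and $N$ appears as the image of the natural map $\bigoplus_i Rx_i \to M$, hence lies in $\mathcal{W}$ by cokernel closure. So the problem reduces to showing $Rx \in \mathcal{W}$ for every $x \in M \in \mathcal{W}$.

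Second, I would develop the basic construction tools. For $M \in \mathcal{W}$ and $r \in R$, the multiplication endomorphism $r\cdot : M \to M$ yields $(0:_M r)$, $rM$, and $M/rM$ in $\mathcal{W}$. Iterating with generators of an ideal $I = (r_1,\dots,r_n)$, and using that $M^n \in \mathcal{W}$ by extension closure, one obtains $(0:_M I)$, $IM$, $M/IM \in \mathcal{W}$ for every ideal $I$. A further consequence is closure under direct summands: for a decomposition $L = A \oplus B$ with $L \in \mathcal{W}$, the projection $L \to L$ onto $A$ is an endomorphism whose image is $A$, so $A \in \mathcal{W}$.

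Third, I would establish the strengthened goal: $R/\p \in \mathcal{W}$ for every $\p \in \Ass M$ (which yields $Rx \in \mathcal{W}$ for $\p = \ann(x)$). When $\p = \m$ is a maximal ideal in $\Supp M$, the module $M/\m M \in \mathcal{W}$ is a nonzero finite-dimensional $R/\m$-vector space, hence a direct sum of copies of $R/\m$; direct-summand closure then gives $R/\m \in \mathcal{W}$. For a general prime $\p$, I would proceed by Noetherian induction on $\dim R/\p$, using a non-zero-divisor $a$ modulo $\p$ to produce the short exact sequence $0 \to R/\p \xrightarrow{\cdot a} R/\p \to R/(\p + aR) \to 0$ relating $R/\p$ to a module whose support is strictly smaller.

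The main obstacle is the induction step for non-maximal primes. The module $(0:_M \p) \in \mathcal{W}$ is a finitely generated module over the domain $R/\p$ containing $R/\p$ as a submodule, but in general not as a direct summand, so we cannot simply extract $R/\p$ by an endomorphism. Resolving this by carefully combining the kernel, cokernel, and extension operations above with the Noetherian induction to assemble $R/\p$ from modules supported on strictly smaller closed sets is the technical heart of the argument, and is precisely the place where the wide hypothesis is upgraded to the Serre conclusion.
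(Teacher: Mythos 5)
The paper does not prove this theorem; it is quoted from Takahashi \cite[Theorem~3.1, Corollary~3.2]{T08} and used as a black box in the proof of Theorem~\ref{Theorem Coh}, so there is no in-paper argument to compare against. Evaluating your proposal on its own: Parts~1 and~2 are correct and standard (closure of a wide $\mathcal{W}$ under $(0:_M I)$, $IM$, $M/IM$ for any ideal $I$, and under direct summands via idempotent endomorphisms), as is the maximal-ideal case of Part~3. A small wrinkle in Part~1 is that $\ann(x)$ is generally not prime, so ``$Rx\in\mathcal{W}$ for $\p=\ann(x)$'' does not cover all cyclic submodules; you would instead take a prime filtration of $Rx\cong R/\ann(x)$ with quotients $R/\p_i$ for $\p_i\in\Supp M$, choose $\q_i\in\Ass M$ with $\q_i\subseteq\p_i$, and use $R/\p_i=(R/\q_i)/\p_i(R/\q_i)$ to propagate membership upward once each $R/\q_i$ is known to lie in $\mathcal{W}$.

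The genuine gap, which you honestly flag, is the non-maximal induction step. The sequence $0\to R/\p\xrightarrow{\cdot a}R/\p\to R/(\p+aR)\to 0$ is circular for your purposes: extension closure requires the sub and quotient to already lie in $\mathcal{W}$, and the sub term here is $R/\p$ itself --- precisely what you are trying to prove. Noetherian induction only supplies membership for modules supported on $V(\p+aR)$, and none of kernel, cokernel, or extension closure lets you ``descend'' from such a module to one supported on all of $V(\p)$. The missing idea is to run the induction through the ambient module rather than through the abstract cyclic module $R/\p$: since the wide operations only ever produce objects from $M$, one must manufacture $R/\p$ as a (co)kernel of suitable endomorphisms of modules such as $M/\p M$ or $(0:_M\p)$ (which already lie in $\mathcal{W}$), up to error terms supported on strictly smaller closed sets that the induction hypothesis handles. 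This extraction step is the technical core of Takahashi's Theorem~3.1, and without it the proposal does not yet constitute a proof.
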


\noindent
It is worth nothing that Hovey \cite{Ho01} proved the theorem by using the Hopkins-Neeman-Thomason theorem, but in the case when $R$ is a quotient ring of a coherent regular ring by a finitely generated ideal.

In the present paper we want to consider the artinian analogue of these results. 

In section \ref{Coh}, we consider wide subcategories of artinian modules. 
We shall show that the artinian analogue of Theorem \ref{Theorem A} also holds.

\begin{theorem}[Theorem \ref{Theorem Coh}]
Let $R$ be a noetherian ring. 
Then every wide subcategory of $\ArtR$ is a Serre subcategory of $\ArtR$. 
\end{theorem}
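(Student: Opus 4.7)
The plan is to reduce to the noetherian statement, Theorem \ref{Theorem A}, via Matlis duality. Let $\X$ be a wide subcategory of $\ArtR$. Since $\X$ is already closed under kernels, cokernels and extensions, it suffices to show closure under submodules: once this is established, every short exact sequence $0 \to A \to B \to C \to 0$ with $B \in \X$ automatically has $A \in \X$ (as a submodule) and then $C \in \X$ (as a cokernel), yielding the Serre property. Fix $M \in \X$ and a submodule $N \subseteq M$; the goal is to prove $N \in \X$.

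First I would reduce to the local case by a primary decomposition. An artinian $R$-module has finite support consisting entirely of maximal ideals; write $\Supp M = \{\m_{1}, \ldots, \m_{n}\}$ and decompose $M = \bigoplus_{i=1}^{n} \Gamma_{\m_{i}}(M)$, where $\Gamma_{\m_{i}}(M) = \{x \in M : \m_{i}^{k} x = 0 \text{ for some } k\}$. Each summand is the image of an idempotent $e_{i} \in \End_{R}(M)$ (these idempotents arise from the Chinese Remainder splitting $R/\prod_{j} \m_{j}^{k} \cong \prod_{j} R/\m_{j}^{k}$ for $k$ large), equivalently the kernel of $1-e_{i}$; thus $\Gamma_{\m_{i}}(M) \in \X$ by wideness. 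Setting $\Gamma_{\m_{i}}(N) = N \cap \Gamma_{\m_{i}}(M)$ yields the parallel splitting $N = \bigoplus_{i} \Gamma_{\m_{i}}(N)$, so it suffices to prove $\Gamma_{\m_{i}}(N) \in \X$ for each $i$. This reduces the problem to the situation where both $M$ and $N$ are supported at a single maximal ideal $\m$.

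In that local case, $M$ and $N$ carry canonical $\hRm$-module structures and remain artinian over $\hRm$. Let $\X_{\m}$ denote the full subcategory of $\Art(\hRm)$ consisting of the objects of $\X$ supported at $\m$. A brief check, using that kernels, cokernels and extensions taken over $\hRm$ agree with those taken over $R$ for such modules, shows that $\X_{\m}$ is wide in $\Art(\hRm)$. Matlis duality $D = \Hom_{\hRm}(-,\, E_{\hRm}(\hRm/\m\hRm))$ is an exact contravariant equivalence between $\Art(\hRm)$ and $\mod(\hRm)$; it interchanges kernels with cokernels and preserves extensions, so $D(\X_{\m})$ is a wide subcategory of $\mod(\hRm)$. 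By Theorem \ref{Theorem A}, $D(\X_{\m})$ is then a Serre subcategory of $\mod(\hRm)$, in particular closed under arbitrary quotient modules. Dualizing the inclusion $N \hookrightarrow M$ yields a surjection $D(M) \twoheadrightarrow D(N)$, so $D(N) \in D(\X_{\m})$; applying $D$ once more and using $D \circ D \cong \mathrm{id}$ on $\Art(\hRm)$, we conclude $N \in \X_{\m} \subseteq \X$, as desired.

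The main obstacle I expect lies in the first reduction step: producing the idempotents $e_{i}$ as genuine $R$-module endomorphisms of $M$, and verifying that closure properties transfer cleanly between $\ArtR$ and $\Art(\hRm)$ when the $\hRm$-action is the canonical extension of the $R$-action on an $\m$-supported artinian module. Once those points are pinned down, the passage through Matlis duality and the invocation of Theorem \ref{Theorem A} are purely formal.
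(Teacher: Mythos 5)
Your proof is correct, and it takes a genuinely (if modestly) different route from the paper's. The paper argues by contradiction: given $X\in\X$ with a submodule $M\notin\X$, it forms the \emph{smallest} wide subcategory $\WideR(X)$ of $\ArtR$ containing $X$, shows via Corollary \ref{corollary coh} that this agrees with the smallest wide subcategory $\Wide_{\hR}(X)$ of $\Art(\hR)$ for the single complete \emph{semi-local} ring $\hR=\hR^{(J_X)}$, and then applies Matlis duality over that semi-local ring together with Theorem \ref{Theorem A} to conclude $M\in\WideR(X)\subseteq\X$. You instead first split $M$ and $N$ into their $\m_i$-torsion pieces via the idempotent projections (an application of Lemma \ref{torsion decomp}), thereby reducing to modules over a complete \emph{local} ring $\hRm$, and then work not with a generated wide subcategory but with the slice $\X_\m$ of the given $\X$, verifying directly that it is wide over $\hRm$ because $\hRm$-linear maps and $R$-linear maps (and hence kernels, cokernels, extensions) coincide for $\m$-torsion artinian modules. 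Both approaches rest on the same two pillars — Matlis duality over a complete (semi-)local ring and Takahashi's Theorem \ref{Theorem A} — but yours avoids the semi-local machinery at the cost of an extra torsion-decomposition step, while the paper avoids the decomposition at the cost of Corollary \ref{corollary coh} and the slightly less familiar semi-local Matlis duality. One small remark: the point you flagged as the "main obstacle," namely that morphisms, kernels, cokernels and extensions over $\hRm$ agree with those over $R$ for $\m$-torsion artinian modules, is indeed the place where care is required, and it is exactly the content the paper isolates in Lemma \ref{Sharp} and packages in Corollary \ref{corollary coh}; your brief justification is correct but would benefit from explicitly invoking that the $\hRm$-action is the unique continuous extension of the $R$-action, which forces every $R$-linear map between such modules to be $\hRm$-linear.
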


In section \ref{Class}, we propose to classify Serre subcategories of artinian modules. 
We consider some completion of a ring (see Proposition \ref{isom}), so that all of artinian modules can be regarded as modules over it. 
We classify Serre subcategories in terms of a specialization closed subset of the set consisting of closed prime ideals of the completed ring.

\begin{theorem}[Theorem \ref{classification}]
Let $R$ be a noetherian ring. 
Then one has an isomorphism
$$
\begin{array}{l}
\{ \ \text{subcategories of}\ \ArtR \ \text{closed under quotient modules and extensions}\ \} \\ 
\qquad \cong \{ \ \text{subsets of the set consisting of closed prime ideals of }\ \hR \ \}.  
\end{array}
$$
Moreover this induces the isomorphism
$$
\begin{array}{l}
\{ \ \text{Serre subcategories of}\ \ArtR \ \} \\ 
\qquad \cong \left\{ 
\begin{array}{c}
\text{specialization closed subsets of}\\
\text{the set consisting of closed prime ideals of }\ \hR 
\end{array}
\right\}.
\end{array}
$$
\end{theorem}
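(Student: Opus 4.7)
The plan is to prove both bijections by adapting the proof of Theorem \ref{Theorem B} through Matlis duality, after reducing to the case of a complete local ring.

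I would begin by defining the two maps. For a subcategory $\mathcal{M}$ of $\ArtR$ closed under quotient modules and extensions, set $\Psi(\mathcal{M}) = \bigcup_{M \in \mathcal{M}} \Att_{\hR} M$, where each $M$ is viewed as an $\hR$-module via Proposition \ref{isom}. For a set $\mathcal{S}$ of closed prime ideals of $\hR$, set $\Phi(\mathcal{S}) = \{ M \in \ArtR : \Att_{\hR} M \subseteq \mathcal{S} \}$. The behavior of attached primes in short exact sequences makes $\Phi(\mathcal{S})$ closed under quotients and extensions, and yields the easy inclusions $\mathcal{M} \subseteq \Phi \Psi(\mathcal{M})$ and $\Psi \Phi(\mathcal{S}) \subseteq \mathcal{S}$ immediately.

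The crux is to prove $\Phi \Psi(\mathcal{M}) \subseteq \mathcal{M}$. Any artinian $R$-module decomposes as a finite direct sum $M = \bigoplus_{\m} \Gamma_{\m}(M)$, where $\m$ ranges over maximal ideals and each summand is naturally an artinian $\hRm$-module. Since $\mathcal{M}$ is closed under quotients and extensions, I may work one $\m$ at a time and replace $R$ by the complete local ring $\hRm$. Over such a ring, Matlis duality $D = \Hom_{\hRm}(-, E)$, with $E$ the injective hull of the residue field, is an exact anti-equivalence between artinian and finitely generated $\hRm$-modules that exchanges quotients with submodules and sends $\Att_{\hRm}(-)$ to $\Ass_{\hRm}(D(-))$. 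A subcategory of $\ArtR$ closed under quotients and extensions therefore transports to a subcategory of $\mod(\hRm)$ closed under submodules and extensions, to which the top correspondence of Theorem \ref{Theorem B} applies; dualizing back establishes the inclusion locally, and assembling the local pieces yields the first bijection. For the Serre case, I would additionally check that closure under subobjects on the artinian side corresponds under $D$ to closure under quotients on the finitely generated side, so that Serre subcategories match Serre subcategories, and the specialization-closed subset condition on the $\hR$-side is then exactly the one provided by the bottom correspondence of Theorem \ref{Theorem B}.

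The main obstacle will be the bookkeeping around the notion of ``closed prime ideal of $\hR$''. One must verify that these primes admit a disjoint-union description over $\m \in \Max R$ in terms of $\Spec \hRm$, that specialization closure is compatible with this description, and that $\Att_{\hR}(M)$ corresponds to $\bigsqcup_{\m} \Att_{\hRm}(\Gamma_{\m}(M))$ under the identification. Once these compatibilities are in place, the argument above becomes essentially a formal dualization of Takahashi's proof, with Matlis duality doing all the real work.
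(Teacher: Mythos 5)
Your outline reproduces the paper's overall strategy (Matlis duality over the local pieces $\hRm$, reduction via $M=\bigoplus_{\m}\Gamma_{\m}(M)$, and the compatibility $\Att_{\hR}M=\coprod_{\m}\Att_{\hRm}\Gamma_{\m}(M)$), but it leaves a genuine gap on one of the two directions needed for a bijection. You dispose of $\Psi\Phi(\mathcal{S})\subseteq\mathcal{S}$ as ``immediate'' and then say the crux is $\Phi\Psi(\mathcal{M})\subseteq\mathcal{M}$; but you never address the reverse containment $\mathcal{S}\subseteq\Psi\Phi(\mathcal{S})$, which is indispensable and not formal. It requires exhibiting, for every closed prime $\p$ of $\hR$ (equivalently, every $\p\in\Spec\hRm$ for some $\m$), an artinian $R$-module with attached prime set exactly $\{\p\}$; the paper takes $E_{\hRm/\p\hRm}(\hRm/\m\hRm)$, which is a $\p$-secondary $\hRm$-module and an $R$-submodule of $E_{R}(R/\m)$, hence artinian over $R$. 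Without this construction, $\Psi$ could fail to be surjective (for instance if some closed prime were never attached to any artinian module), so the bijection would not follow.

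A second, smaller remark on structure: the paper does not dualize the bijection directly. It first isolates the artinian analogue of the Takahashi--Krause generation criterion (its Theorem \ref{theorem 1}: $M\in\quotext_{R}(N)$ iff $\Att_{\hR}M\subseteq\Att_{\hR}N$), proved by reducing to the $\m$-torsion pieces and applying Matlis duality together with Theorem \ref{Krause}. It then deduces $\Phi\Psi(\mathcal{M})\subseteq\mathcal{M}$ by, for a module $M$ with $\Att_{\hR}M\subseteq\Psi(\mathcal{M})$, choosing $X^{(\P)}\in\mathcal{M}$ realizing each $\P\in\Att_{\hR}M$, forming the finite direct sum $X$, and invoking the generation criterion. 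Your one-$\m$-at-a-time dualization of the whole correspondence can be made to work, but you should then make precise which subcategory of $\mod(\hRm)$ you are transporting to (namely the dual of $\mathcal{M}_{\m}:=\{X\in\mathcal{M}: X\text{ is }\m\text{-torsion}\}$, which is closed under submodules and extensions) and verify $\Psi(\mathcal{M})\cap\Spec\hRm=\bigcup_{Y\in\mathcal{M}_{\m}}\Att_{\hRm}Y$; you flag this as bookkeeping, which is fair, but spelling it out is necessary for the transport to be well defined. Finally, for the Serre half, the paper also uses the injective hulls $E_{\hRm/\q\hRm}(\hRm/\m\hRm)\hookrightarrow E_{\hRm/\p\hRm}(\hRm/\m\hRm)$ for $\p\subseteq\q$ to show $\Psi(\mathcal{M})$ is specialization closed; your outline gestures at this but does not supply the witnesses.
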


In this paper, we always assume that $R$ is a commutative ring with identity, and by a subcategory we mean a nonempty full subcategory which is closed under isomorphism.  	

\section{wide subcategories of artinian modules}\label{Coh}

In this section, we investigate wide subcategories of artinian modules. 
First we recall the definitions of the categories.

\begin{definition}
A subcategory of an abelian category is said to be a wide subcategory if it is closed under kernels, cokernels and extensions. 
We also say that a subcategory is a Serre subcategory if it is a wide subcategory which is closed under subobjects.
\end{definition}

Let $M$ be an artinian $R$-module. 
We denote by $\Soc (M)$ the sum of simple submodules of $M$. 
Since $\Soc (M)$ is also artinian, there exist only finitely many maximal ideals $\m$ of $R$ for which $\Soc (M)$ has a submodule isomorphic to $R/\m$. 
Let the distinct such maximal ideals be $\m_1, \cdots ,  \m_s$. 
Set $J_M = \bigcap _{i = 1}^{s}\m $ and $\hRM = \varprojlim R/J_{M}^n$.

%
%
\begin{lemma}\cite[Lemma 2.2]{S92}\label{Sharp}
Each non-zero element $m \in M$ is annihilated by some power of $J_M$. 
Hence $M$ has the natural structure of a module over $\hRM$ in such a way that a subset of $M$ is an $R$-submodule if and only if it is an $\hRM$-submodule. 
\end{lemma}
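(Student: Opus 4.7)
The plan is to establish the two assertions separately. For the first, I would combine the descending chain condition with a Nakayama-style argument to show that the submodule $J_M^n m$ eventually vanishes. For the second, once the first is in hand, $M$ naturally decomposes as the union $\bigcup_n (0 :_M J_M^n)$ of $R/J_M^n$-modules, and an $\hRM$-action is transferred through the canonical quotients $\hRM \to R/J_M^n$.

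In more detail, for the first claim fix a nonzero $m \in M$ and consider the descending chain $Rm \supseteq J_M m \supseteq J_M^2 m \supseteq \cdots$. Artinian-ness of $M$ forces stabilization at some $n$, and then $N := J_M^n m$ satisfies $N = J_M N$. I would then argue $N = 0$ as follows. The module $N$ is finitely generated over $R$ (since $J_M^n$ is, $R$ being noetherian), and simultaneously artinian as a submodule of $M$, hence of finite length. If $N$ were nonzero, it would contain a simple submodule $S$; but $S \subseteq \Soc(M)$ forces $S \cong R/\m_i$ for some $i$, so $\m_i \in \Ass(N) \subseteq \Supp(N)$. On the other hand, for any prime $\p \supseteq J_M$, localizing the identity $N = J_M N$ at $\p$ gives $N_{\p} = (J_M)_{\p} N_{\p}$ with $(J_M)_{\p} \subseteq \p R_{\p}$, and Nakayama's lemma applied to the local ring $R_{\p}$ (noting $N_{\p}$ is finitely generated) forces $N_{\p} = 0$. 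Taking $\p = \m_i$ yields the desired contradiction, so $N = 0$, meaning $J_M^n m = 0$.

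Given this, $M = \bigcup_n (0 :_M J_M^n)$ with each summand naturally an $R/J_M^n$-module, and the compatible canonical maps $\hRM \to R/J_M^n$ endow $M$ with an $\hRM$-action defined by $r \cdot m := r_n \cdot m$, where $r_n \in R$ is any lift of the image of $r \in \hRM$ in $R/J_M^n$ and $n$ is chosen so that $J_M^n m = 0$ (well-definedness follows from the compatibility). Under this action an $R$-submodule $N \subseteq M$ is automatically stable under $\hRM$, since $r \cdot m = r_n \cdot m \in N$, and conversely any $\hRM$-submodule is $R$-stable via the ring homomorphism $R \to \hRM$. The principal obstacle is precisely the Nakayama step in the first claim: since $J_M$ is not a priori contained in the Jacobson radical of $R$, one cannot invoke Nakayama globally, and the localization trick combined with the constraint on simple submodules coming from $\Soc(M)$ is what makes the argument succeed.
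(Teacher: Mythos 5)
Your argument is correct, but it follows a genuinely different route from the paper's for the first assertion. The paper (following Sharp) observes that $M$, being an essential extension of its socle $\Soc(M)=\oplus_{i=1}^s(R/\m_i)^{n_i}$, embeds into $\oplus_{i=1}^s E_R(R/\m_i)^{n_i}$, and then invokes the fact that every element of the injective hull $E_R(R/\m)$ is killed by a power of $\m$; this immediately gives that every element of $M$ is killed by a power of $J_M=\m_1\cdots\m_s$. You instead argue internally to $M$: the descending chain $J_M^n m$ stabilizes, giving $N:=J_M^n m$ with $N=J_M N$; you then kill $N$ by observing it is finitely generated (here you use that $J_M^n$, hence $N$, is finitely generated — automatic when $R$ is noetherian) and localizing at $\m_i$, where $\m_i\in\Ass(N)$ forces $N_{\m_i}\ne 0$ while Nakayama forces $N_{\m_i}=0$. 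This avoids any appeal to the structure of injective hulls, which is a nice gain in self-containedness; the trade-off is that your Nakayama step requires $J_M$ to be finitely generated, whereas Sharp's argument is designed to work over an arbitrary commutative ring (the lemma is quoted from \cite{S92}, which does not assume $R$ noetherian). In the context of the present paper this is harmless, since the main theorems all assume $R$ noetherian, but it is worth flagging that you have proved a slightly less general statement than the one cited. For the second assertion — transporting the $R$-action to an $\hRM$-action via the compatible quotients $\hRM\to R/J_M^n$ and defining $\hat r\cdot x=r_k x$ for $k$ with $J_M^k x=0$, then noting that $R$-submodules and $\hRM$-submodules coincide — your argument is essentially identical to the paper's.
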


\begin{proof}
Although a proof of the lemma is given in \cite{S92}, we need in the present paper how the $\hRM$-module structure is defined for an artinian module $M$. 
For this reason we briefly recall the proof of the lemma.

Since $\Soc (M) = \oplus _{i=1}^{s}(R/\m _{i})^{n_{i}}$, $M$ can be embedded in $\oplus _{i=1}^{s}(E_{R}(R/\m _{i}))^{n_{i}}$ where $E_{R}(R/\m )$ is an injective hull of $R/\m$. 
Note that an element of $E_{R}(R/\m)$ is annihilated by some power of $\m$.  
Hence one can show that each element of $M$ is annihilated by some power of $ \m _1 \cdots \m _s = J_M$.

Let $x \in M$ and $\hat{r} = (r_n + J_M ^n)_{n \in \N} \in \hRM$. 
Suppose that $J _M ^k x = 0$. 
It is straightforward to check that $M$ has the structure of an $\hRM$-module such that $\hat{r}x = r_{k}x$.

\end{proof}

\begin{remark}
As shown in the proof of Lemma \ref{Sharp},  $M$ can be embedded in $\oplus _{i=1}^{s}(E_{R}(R/\m _{i}))^{n_{i}}$. 
Thus the maximal ideals $\m_1, \cdots ,  \m_s$ are just associative prime ideals of $M$ since $\Ass _R \oplus _{i=1}^{s}(E_{R}(R/\m _{i}))^{n_{i}} = \Ass _R \Soc (M) = \{ \m_1, \cdots ,  \m_s \}$. 

\end{remark}

%
%
By virtue of Lemma \ref{Sharp}, each artinian $R$-module can be regarded as a module over some complete semi-local ring. 
We note that Matlis duality theorem is allowed over a noetherian complete semi-local ring (cf. \cite[Theorem 1.6]{O76}). 
It is the strategy of the paper that we replace the categorical property on a subcategory of finitely generated (namely, noetherian) modules with that of artinian modules by using Matlis duality.    
We denote by $\ArtR$ the subcategory consisting of artinian $R$-modules.

\begin{lemma}\label{duality}
Let $(R, \m _{1}, \cdots , \m _{s})$ be a noetherian complete semi-local ring and set $E = \oplus _{i = 1}^{s} E_{R}(R/\m_{i})$. 
For each subcategory $\X$ of $\ModR$, we denote by $\X ^\vee = \{ M^{\vee}\ |\ M \in \X \}$ where $(-)^\vee = \Hom _{R}(- , E)$. 
Then the following assertions hold.
\begin{itemize}
\item[(1)] If $\X$ is a subcategory of $\ArtR$ (resp. $\modR$) which is closed under quotient modules (resp. submodules) and extensions, then $\X ^\vee$ is a subcategory of $\modR$ (resp. $\ArtR$) which is closed under submodules (resp. quotient modules) and extensions.  

\item[(2)] If $\X$ is a wide subcategory of $\ArtR$ (resp. $\modR$), then $\X ^\vee$ is also a wide subcategory of $\modR$ (resp. $\ArtR$). 

\item[(3)] If $\X$ is a Serre subcategory of $\ArtR$ (resp. $\modR$), then $\X ^\vee$ is also a Serre subcategory of $\modR$ (resp. $\ArtR$). 
\end{itemize}
\end{lemma}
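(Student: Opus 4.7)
The plan is to let Matlis duality do all the work. Under the hypothesis that $R$ is a noetherian complete semi-local ring with injective cogenerator $E$, the functor $(-)^{\vee}=\Hom_R(-,E)$ is exact and contravariant, and it restricts to a duality between $\modR$ and $\ArtR$; in particular $M^{\vee\vee}\cong M$ whenever $M$ is finitely generated or artinian, and the dual of a module of one type is of the other type. Exactness sends short exact sequences to short exact sequences (with arrows reversed), submodule inclusions to surjections, and kernels to cokernels, so each closure property for $\X$ should translate into its ``dual'' closure property for $\X^{\vee}$.

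For assertion (1), I focus on $\X\subseteq\ArtR$; the case $\X\subseteq\modR$ is symmetric. To obtain submodule closure of $\X^{\vee}$, take an inclusion $L\hookrightarrow N^{\vee}$ with $N\in\X$. Since $R$ is noetherian and $N^{\vee}$ is finitely generated, so is $L$, hence Matlis reflexivity $L\cong L^{\vee\vee}$ is available. Dualizing the inclusion yields a surjection $N\cong N^{\vee\vee}\twoheadrightarrow L^{\vee}$, and the quotient-closure hypothesis on $\X$ forces $L^{\vee}\in\X$, whence $L\in\X^{\vee}$. For extension closure, apply $(-)^{\vee}$ to a short exact sequence $0\to A^{\vee}\to M\to B^{\vee}\to 0$ with $A,B\in\X$ to obtain $0\to B\to M^{\vee}\to A\to 0$ after identifying the outer duals via reflexivity; extension closure of $\X$ places $M^{\vee}$ in $\X$, and a further application of reflexivity (valid because $M$ is finitely generated, being an extension of finitely generated modules) gives $M\in\X^{\vee}$.

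For (2), the analogous argument handles kernels and cokernels. Any morphism $f\colon A^{\vee}\to B^{\vee}$ in $\X^{\vee}$ is of the form $g^{\vee}$ for a unique $g\colon B\to A$, because Matlis duality is a contravariant equivalence between the relevant subcategories, and exactness of $(-)^{\vee}$ gives $\Ker f\cong (\mathrm{Coker}\,g)^{\vee}$ and $\mathrm{Coker}\,f\cong (\Ker g)^{\vee}$. The wide-closure hypothesis on $\X$ places $\Ker g$ and $\mathrm{Coker}\,g$ in $\X$, so their duals lie in $\X^{\vee}$.

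For (3), note that any Serre subcategory is automatically closed under both submodules and quotients, since a quotient is the cokernel of the inclusion of its kernel, and a Serre subcategory is wide and submodule-closed. The same dualization used in (1) applies verbatim to both forms of closure, interchanging submodules with quotients in either direction; combined with (2), every closure property defining a Serre subcategory is inherited by $\X^{\vee}$. The only subtlety throughout is to confirm that intermediate modules are of the correct finiteness type so that Matlis reflexivity applies, which is automatic because the noetherian hypothesis keeps $\modR$ closed under submodules and $\ArtR$ closed under quotients. I do not foresee any deeper obstacle.
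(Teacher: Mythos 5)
Your argument is correct and is exactly the argument the paper intends: the paper's proof is the one-line remark that Matlis duality holds over a noetherian complete semi-local ring, and your write-up simply supplies the routine dualization details (exactness, reflexivity on $\modR$ and $\ArtR$, the interchange of submodules with quotients and of kernels with cokernels) that the paper leaves implicit.
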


\begin{proof}
Since Matlis duality theorem is allowed over a noetherian complete semi-local ring, the assertions hold by Matlis duality.

\end{proof}

\begin{definition}\label{coh step}
Let $M$ be an $R$-module. 
For a nonnegative integer $n$, we inductively define a subcategory $\Wide ^{n}_{R} (M)$ of $\ModR$ as follows:

\begin{itemize}
\item[(1)] Set $\WideR ^{0}(M) = \{ M \}$.

\item[(2)] For $n \geq  1$, let $\WideR ^{n} (M)$ be a subcategory of $\ModR$ consisting of all $R$-modules $X$ having an exact sequence of either of the following three forms: 
$$
\begin{array}{l}
A \to B \to X \to 0, \\
0 \to X \to A \to B, \\ 
0 \to A \to X \to B \to 0 
\end{array}
$$
where $A, B \in \WideR ^{n-1} (M)$.
\end{itemize}
\end{definition}

\begin{remark}\label{remark coh step}
Let $M$ be an $R$-module and $n$ be a nonnegative integer. 
Then the following hold. 
\begin{itemize}
\item[(1)] There is an ascending chain $\{ M \} = \WideR ^{0}(M) \subseteq \WideR ^{1} (M) \subseteq \cdots \subseteq \WideR ^{n} (M) \subseteq \cdots \subseteq \WideR (M)$ of subcategories $\ModR$, where $\WideR (M)$ is the smallest wide subcategory of $\ModR$ which contains $M$.

\item[(2)] $\bigcup _{n \geq 0}\WideR ^{n}(M)$ is wide and the equality $\WideR (M) = \bigcup _{n \geq 0}\WideR ^{n}(M)$ holds.
\end{itemize}
\end{remark}

\begin{definition}
Let $J$ be an ideal of $R$. 
For each $R$-module $M$, we denote by $\Gamma _{J} (M)$ the set of elements of $M$ which are annihilated by some power of $J$, namely $\Gamma _{J} (M) = \bigcup _{n \in \N}( 0 :_M J^n )$.
An $R$-module $M$ is said to be $J$-torsion if $M = \Gamma _{J} (M)$. 
We denote by $\Mod _{J}(R)$ the subcategory consisting of $J$-torsion $R$-modules. 
\end{definition}

\begin{lemma}\label{lemma J-torsion}
For each object $M$ in $\Mod _{J}(R)$, $M$ has the structure of an $\hat{R}^{(J)}$-module where $\hat{R}^{(J)}$ is a $J$-adic completion of $R$. 
\end{lemma}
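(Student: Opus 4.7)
The plan is to mimic the construction appearing in the proof of Lemma \ref{Sharp}, but now in the more general setting of an arbitrary $J$-torsion module rather than an artinian one. The key input is the defining property that every element of $M \in \Mod_J(R)$ is annihilated by some power of $J$.

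First, given $x \in M$, fix an integer $k = k(x) \geq 0$ such that $J^k x = 0$; such $k$ exists because $M = \Gamma_J(M) = \bigcup_{n \in \N}(0:_M J^n)$. For $\hat{r} = (r_n + J^n)_{n \in \N} \in \hat{R}^{(J)}$, I would define the scalar multiplication by
\[
\hat{r} \cdot x := r_k x.
\]
The first verification is that this does not depend on the choice of representative $r_k$: if $r_k' \in r_k + J^k$, then $(r_k' - r_k) x \in J^k x = 0$. The second verification is that it does not depend on the choice of $k$: if $k' \geq k$, then the coherence condition $r_{k'} - r_k \in J^k$ together with $J^k x = 0$ yields $r_{k'} x = r_k x$.

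Next I would check the module axioms. Given $x, y \in M$, pick $k$ large enough that both $J^k x = 0$ and $J^k y = 0$, and the axioms $\hat{r}(x + y) = \hat{r}x + \hat{r}y$, $(\hat{r} + \hat{s})x = \hat{r}x + \hat{s}x$, and $(\hat{r}\hat{s})x = \hat{r}(\hat{s}x)$ reduce to the corresponding identities in $R$ acting on $M$, using that the ring operations in $\hat{R}^{(J)}$ are defined componentwise modulo $J^n$. The unit $\hat{1} = (1 + J^n)_n$ clearly acts as the identity. Finally, via the canonical ring homomorphism $R \to \hat{R}^{(J)}$ sending $r$ to $(r + J^n)_n$, the induced $\hat{R}^{(J)}$-action restricts to the original $R$-action, since $(r + J^n)_n \cdot x = r \cdot x$ for any $k$ with $J^k x = 0$.

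There is essentially no obstacle here; it is a direct bookkeeping argument, and the only subtlety is the dual independence of the choice of representative and of the choice of the exponent $k$, both of which follow from a single observation — that $J^k x = 0$ forces all elements of $r_k + J^k$ to act identically on $x$.
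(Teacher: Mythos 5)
Your argument is correct and is exactly the construction the paper sketches in the proof of Lemma \ref{Sharp} (there applied to the ideal $J_M$); the paper gives no separate proof of Lemma \ref{lemma J-torsion}, evidently because it is the same verification with $J$ in place of $J_M$, which is what you have carried out.
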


\begin{remark}
By using an inductive argument on $n$, we can show that if $M$ is artinian (resp. $J$-torsion), then $\bigcup _{n \geq 0}\WideR ^{n}(M)$, hence $\WideR (M)$, is a subcategory of $ \ArtR$ (resp. $\Mod _{J}(R)$) since $\ArtR$ (resp. $\Mod _{J}(R)$) is a wide subcategory. 
\end{remark}

\begin{corollary}\label{corollary coh}
Let $M$ be an artinian $R$-module. 
Then $\WideR (M)$ and $\Wide _{\hRM} (M)$ are equivalent as subcategories of $\Art (\hRM )$. 
\end{corollary}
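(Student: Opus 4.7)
The plan is to reduce the statement to an inductive comparison of $\WideR^{n}(M)$ with $\Wide_{\hRM}^{n}(M)$ (in the sense of Definition \ref{coh step}), using the fact that the two module structures on a $J_M$-torsion module give rise to the same notions of submodule, kernel, cokernel and extension. Combined with Remark \ref{remark coh step}(2), this will yield the desired equivalence.

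First I would verify that $\WideR(M)$ naturally sits inside $\Art(\hRM)$. By Lemma \ref{Sharp}, $M$ is $J_M$-torsion. Since $\Mod_{J_M}(R)$ is closed under kernels, cokernels and extensions, it is a wide subcategory of $\ModR$ containing $M$, and therefore $\WideR(M) \subseteq \Mod_{J_M}(R)$. By Lemma \ref{lemma J-torsion}, every object $N \in \WideR(M)$ acquires a canonical $\hRM$-module structure whose underlying $R$-action is the original one; by the remark immediately preceding the corollary, $N$ is moreover artinian. Hence each object of $\WideR(M)$ lives in $\Art(\hRM)$, and a symmetric observation applies to $\Wide_{\hRM}(M)$ via restriction of scalars along $R \to \hRM$.

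The key compatibility step is to check that for any two $J_M$-torsion modules $A,B$, every $R$-linear map $A \to B$ is automatically $\hRM$-linear. This follows from the explicit formula $\hat{r}x = r_k x$ (for any $k$ with $J_M^k x = 0$) appearing in the proof of Lemma \ref{Sharp}: an $R$-linear map preserves annihilators, so applying it commutes with this formula. Together with the second assertion of Lemma \ref{Sharp} (that $R$-submodules and $\hRM$-submodules of $J_M$-torsion modules coincide), this shows that a sequence of $R$-modules in $\Mod_{J_M}(R)$ is exact if and only if the corresponding sequence of $\hRM$-modules is exact. Consequently the three types of exact sequences appearing in Definition \ref{coh step} yield the same class of modules whether formed over $R$ or over $\hRM$.

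With the compatibility in hand, I would prove by induction on $n$ that $\WideR^{n}(M) = \Wide_{\hRM}^{n}(M)$ as subcategories of $\Art(\hRM)$. The base case $n = 0$ is the tautology $\{M\}=\{M\}$, and the inductive step is immediate from the previous paragraph. Passing to the union over $n$ and invoking Remark \ref{remark coh step}(2) gives $\WideR(M) = \Wide_{\hRM}(M)$, which is precisely the claimed equivalence. The only subtle point — and the main thing requiring care — is the structural compatibility verified in the second paragraph; once that is established, the rest is a routine induction.
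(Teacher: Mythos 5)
Your proof is correct and follows the same underlying idea as the paper's (very terse) proof: both rest on identifying $\WideR(M)$ inside $\Mod_{J_M}(R)$ via Lemma \ref{Sharp} and Lemma \ref{lemma J-torsion}, and on the fact that the $R$- and $\hRM$-module structures on $J_M$-torsion modules give the same submodules, hence the same kernels, cokernels and extensions. Your write-up is more careful than the paper's — in particular you explicitly verify that $R$-linear maps between $J_M$-torsion modules are automatically $\hRM$-linear and carry out the induction on $\Wide^n$ — but it is the same argument, just with the details spelled out.
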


\begin{proof}
As remarked above, since $M$ is $J_M$-torsion, we can naturally identify $\WideR (M)$ with a subcategory of $\Mod _{J_M}(R)$. 
It is also a subcategory of $\Art (\hRM )$ by Lemma \ref{lemma J-torsion}. 

\end{proof}

\begin{theorem}\label{Theorem Coh}
Let $R$ be a noetherian ring. 
Then every wide subcategory of $\Art (R)$ is a Serre subcategory of $\Art (R)$. 
\end{theorem}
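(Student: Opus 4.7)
The plan is to reduce the statement to its counterpart for finitely generated modules, Theorem \ref{Theorem A}, by means of Matlis duality. The catch is that Matlis duality is available only over a noetherian complete semi-local ring, whereas $R$ is merely assumed noetherian. The bridge is provided by Lemma \ref{Sharp} and Corollary \ref{corollary coh}: any single artinian $R$-module $M$ acquires the natural structure of a module over the noetherian complete semi-local ring $\hRM$; the lattices of $R$-submodules and of $\hRM$-submodules of $M$ coincide; and $\WideR(M)$ agrees with $\Wide_{\hRM}(M)$ inside $\Art(\hRM)$.

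Let $\X$ be a wide subcategory of $\ArtR$, and take $M \in \X$ together with an $R$-submodule $N \subseteq M$; the goal is to show $N \in \X$. Since $\X$ is wide and contains $M$, it contains $\WideR(M)$, so it suffices to show $N \in \WideR(M)$. Now I work over the complete semi-local ring $\hRM$ and form the Matlis dual $\Wide_{\hRM}(M)^{\vee}$. By Lemma \ref{duality}(2) this is a wide subcategory of $\mod(\hRM)$, so by Theorem \ref{Theorem A} applied to the ring $\hRM$ it is in fact a Serre subcategory of $\mod(\hRM)$. Dualizing back via Lemma \ref{duality}(3), and using that Matlis duality over a noetherian complete semi-local ring is involutive so $(\Wide_{\hRM}(M)^{\vee})^{\vee}$ is identified with $\Wide_{\hRM}(M)$, I conclude that $\Wide_{\hRM}(M)$ is a Serre subcategory of $\Art(\hRM)$.

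By Lemma \ref{Sharp} the submodule $N$ is an $\hRM$-submodule of $M$, so the Serre property just established gives $N \in \Wide_{\hRM}(M) = \WideR(M) \subseteq \X$. Hence $\X$ is closed under $R$-submodules and is a Serre subcategory of $\ArtR$. The only delicate point is keeping track of whether subcategories are regarded over $R$ or over $\hRM$, and verifying that the categorical operations (submodules, kernels, cokernels, extensions) are indifferent to this choice; this is exactly what Lemma \ref{Sharp}, Lemma \ref{lemma J-torsion} and Corollary \ref{corollary coh} are designed to supply, so once that bookkeeping is in place the theorem follows formally from Matlis duality together with Takahashi's theorem.
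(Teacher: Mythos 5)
Your proof is correct and follows essentially the same route as the paper's: pass to the complete semi-local ring $\hRM$ via Lemma \ref{Sharp} and Corollary \ref{corollary coh}, apply Matlis duality (Lemma \ref{duality}) to land in $\mod(\hRM)$, invoke Theorem \ref{Theorem A} there, and dualize back. The paper phrases it as a proof by contradiction and explicitly identifies the dualized subcategory with $\Wide_{\hRM}(M^\vee)$, but your direct version, dualizing the subcategory as a whole and using involutivity, is the same argument.
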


\begin{proof}
Let $\X$ be a wide subcategory of $\ArtR$. 
It is sufficiently to show that $\X$ is closed under submodules. 
Assume that $\X$ is not closed under submodules. 
Then there exists an $R$-module $X$ in $\X$ and $R$-submodule $M$ of $X$ such that $M$ does not belong to $\X$. 
Applying Lemma \ref{Sharp} to $X$, $X$ is a module over the complete semi-local ring $\hR : = \hR ^{(J_X)}$ and $M$ is an $\hat{R}$-submodule of $X$. 
Now we consider the wide subcategory $\WideR (X)$. 
By virtue of Corollary \ref{corollary coh}, $\WideR (X) = \WideRR (X)$ as a subcategory of $\Art (\hR)$. 
Since $\hat{R}$ is a complete semi-local ring, by Matlis duality, we have the equivalence of the categories $\WideRR (X) \cong \{ \WideRR (X)^{\vee } \} ^{op} \cong \WideRR (X^{\vee })^{op}$ where $( - ) ^{\vee} = \Hom _{\hat{R}}(-, E_{\hat{R}}(\hat{R}/J_{X}\hat{R}))$. 
Since $\WideRR (X^{\vee })$ is a wide subcategory of finitely generated $\hat{R}$-modules, it follows from Theorem \ref{Theorem A} that $\WideRR (X^{\vee })$ is a Serre subcategory. 
Thus $M^{\vee}$ is contained in $\WideRR (X^{\vee })$. 
Using Matlis duality again, we conclude that $M$ must be contained in $\WideRR (X) = \WideR (X)$, hence also in $\X$. 
This is a contradiction, so that $\X$ is closed under submodules. 

\end{proof}

\section{Classifying subcategories of artinian modules}\label{Class}

In this section, we shall give the artinian analogue of the classification theorem of subcategories of finitely generated modules (Theorem \ref{classification}).  
First, we state the notion and the basic properties of attached prime ideals which are play a key role of our theorem. 
For the detail, we recommend the reader to refer to \cite{S76, S92} and \cite[\S 6 Appendix.]{M}.

\begin{definition}\label{def of secondary}
Let $M$ be an $R$-module. 
We say that $M$ is secondary if for each $a \in R$ the endomorphism of $M$ defined by the multiplication map by $a$ is either surjective or nilpotent. 
\end{definition}

\begin{remark}
If $M$ is secondary then $\p = \sqrt{\ann _R (M)}$ is a prime ideal and $M$ is said to be $\p$-secondary.
\end{remark}

\begin{definition}
$M = S_1 + \cdots + S_r$ is said to be a secondary representation if $S_i$ is a secondary submodule of $M$ for all $i$. 
And we also say that the representation is minimal if the prime ideals $\p _i  = \sqrt{\ann _R (S_i)}$ are all distinct, and none of the $S_i$ is redundant 
\end{definition}

\begin{definition}\label{def of attached prime}
A prime ideal $\p$ is said to be an attached prime ideal of $M$ if $M$ has a $\p$-secondary quotient. 
We denote  by $\AttR M$ the set of the attached prime ideals of $M$. 
\end{definition}

\begin{remark}\label{remark attach}
Let $M$ be an $R$-module. 
\begin{itemize}
\item[(1)] If $M = S_1 + \cdots + S_r$ is a minimal representation and $p _i = \sqrt{\ann _R (S_i )}$. then $\AttR M = \{ \p _1 , \cdots , \p _r \}$. 
See \cite[Theorem 6.9]{M}.

\item[(2)]Let $M$ be an $R$-module. 
Given a submodule $N \subseteq M$, we have
$$
\AttR M/N \subseteq \AttR M \subseteq \AttR (N) \cup \AttR M/N. 
$$
See \cite[Theorem 6.10]{M}.

\item[(3)]It is known that if $M$ is artinian then $M$ has a secondary representation. 
Thus it has a minimal one. 
See \cite[Theorem 6.11]{M}. 

\end{itemize}
\end{remark}

Let $(R, \m)$ be a noetherian local ring and $\X$ a Serre subcategory of $\ArtR$. 
By virtue of Lemma \ref{Sharp}, $\ArtR$ is equivalent to $\Art (\hR )$ where $\hR$ is an $\m$-adic completion of $R$. 
Now we consider $\X$ as a subcategory of $\Art (\hR)$. 
Since $\X^\vee$ is a Serre subcategory of $\mod (\hR)$ (Lemma \ref{duality}), $\X ^{\vee}$, hence $\X$, corresponds to the specialization closed subset of $\Spec \ \hR$ by Theorem \ref{Theorem B}. 
Namely there is the bijection between the set of Serre subcategories of $\ArtR$ and the set of specialization closed subsets of $\Spec \ \hR$.  
This observation provides us that we should consider a larger set than $\Spec \ R$ to classify subcategories of artinian modules.

\vspace{12pt}

In the rest of this section, we always assume that $R$ is a noetherian ring.
\vspace{12pt}

As mentioned in Lemma \ref{Sharp}, we can determine some complete semi-local rings for each artinian module respectively, so that the artinian module has the module structure over such a completed ring. 
Now we attempt to treat all of artinian $R$-modules as modules over the same completed ring.  
For this, we consider the following set of ideals of $R$: 
$$
\TT = \{ \ I \ | \  \text{a length of}\ R/I \ \text{is finite}\ \} . 
$$

The set $\TT$ forms a directed set ordered by inclusion. 
Then we can consider the inverse system $\{ R/I, f_{I, I'} \}$ where $f_{I, I'}$ are natural surjections. 
That is, 
$$
I, I' \in \TT \ \text{and}\  I' \subseteq I\Rightarrow  f_{I,I'}: R/I' \to R/I .
$$
We denote $\varprojlim _{I \in \TT}R/I $ by $\hat{R}_{\TT}$.

\begin{lemma}\label{extended sharp}
Every artinian $R$-module has the structure of an $\hRTT$-module in such a way that a subset of an artinian $R$-module $M$ is an $R$-submodule if and only if it is an $\hRTT$-submodule. 
Consequently, we have an equivalence of categories $\Art (R) \cong \Art (\hRTT )$. 
\end{lemma}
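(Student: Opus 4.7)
The plan is to use Lemma \ref{Sharp} to equip every artinian $R$-module with an $\hRTT$-action, verify the submodule correspondence through an explicit formula, and then upgrade this identification to an equivalence of categories via a fully faithful and essentially surjective functor.

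Given $M\in\Art(R)$, Lemma \ref{Sharp} makes $M$ into an $\hRM$-module. Each $J_M^n$ lies in $\TT$ since $R/J_M^n$ has finite length, so $\{R/J_M^n\}_n$ is a subdiagram of $\{R/I\}_{I\in\TT}$, yielding a canonical ring homomorphism $\pi_M\colon\hRTT\to\hRM$; restriction of scalars along $\pi_M$ then endows $M$ with an $\hRTT$-structure. To see that $R$-submodules coincide with $\hRTT$-submodules, fix $\hat r\in\hRTT$ and $x\in M$, choose $k$ with $J_M^k x=0$, and note that the action of $\hat r$ on $x$ factors through $\hRTT\twoheadrightarrow R/J_M^k$; writing the image as $r+J_M^k$ for some $r\in R$ gives $\hat r\cdot x=r\cdot x$. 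Hence every $R$-submodule is $\hRTT$-stable, and the reverse containment is automatic from the ring map $R\to\hRTT$.

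For the categorical equivalence, define $F\colon\Art(R)\to\Art(\hRTT)$ by sending each object to itself with the structure above. An $R$-linear map $f\colon M\to N$ is automatically $\hRTT$-linear: given $\hat r\in\hRTT$ and $x\in M$, pick $k,l$ with $J_M^k x=0$ and $J_N^l f(x)=0$, set $I=J_M^k\cap J_N^l\in\TT$ (its quotient embeds in $R/J_M^k\oplus R/J_N^l$), and take one $r\in R$ lifting the image of $\hat r$ in $R/I$; then $f(\hat r x)=f(rx)=rf(x)=\hat r f(x)$. Thus $F$ is fully faithful.

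The hard part is essential surjectivity. Given $N\in\Art(\hRTT)$, view $N$ as an $R$-module via $R\to\hRTT$; the task is to show this underlying $R$-module is artinian and that reapplying $F$ recovers the given $\hRTT$-action. The plan is to mimic the socle argument of Lemma \ref{Sharp} internally in $\hRTT$: the socle of $N$ over $\hRTT$ is a finite sum of simple $\hRTT$-modules, so every $x\in N$ is annihilated by the extension of some $J\in\TT$ to $\hRTT$, and lifting to $R$ rewrites the $\hRTT$-action on $x$ as the action of a single element of $R$. This simultaneously identifies $R$-submodules of $N$ with $\hRTT$-submodules (giving artinianity over $R$) and forces the reconstructed $\hRTT$-action to agree with the original.
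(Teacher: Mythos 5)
Your construction of the $\hRTT$-action via the natural surjection $\pi_M\colon\hRTT\to\hRM$ and restriction of scalars, together with the element-wise reduction of $\hat r\cdot x$ to $r\cdot x$ for a representative $r$ of $\hat r$ modulo $J_M^k$, is a correct formalization of what the paper means by ``the proof goes through similarly to the proof of Lemma~\ref{Sharp},'' and the full faithfulness argument is sound.

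The gap is in the essential surjectivity step, which you rightly flag as the hard part but do not actually close. From the fact that $\Soc_{\hRTT}(N)$ is a finite sum of simple $\hRTT$-modules you conclude that every $x\in N$ is killed by the extension of some $J\in\TT$; but that is precisely what can fail. When $R$ has infinitely many maximal ideals, $\hRTT\cong\prod_{\n\in\max(R)}\hRn$ is a large non-noetherian ring whose maximal ideals are not all of the ``closed'' form $\m\hRm\times\prod_{\n\not=\m}\hRn$: a nonprincipal ultrafilter $\mathcal U$ on $\max(R)$ produces a maximal ideal $\P$ whose residue field is the ultraproduct over $\mathcal U$ of the $\hRn/\n\hRn$, and $\hRTT/\P$ is a simple (hence artinian) $\hRTT$-module on which every $J\in\TT$ acts invertibly rather than nilpotently, since $J\hRTT=\prod_\n J\hRn$ contains the idempotent $1-(\hat e_{\m_1}+\cdots+\hat e_{\m_s})\notin\P$. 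Concretely, for $R=\Z$ the residue field is $\prod_p\mathbb F_p/\mathcal U$, a field of characteristic zero, hence torsion-free and not even artinian over $\Z$. Thus $F$ is not essentially surjective onto the full category $\Art(\hRTT)$, and your proposed induction never starts. The paper's own proof offers nothing beyond ``it holds from the definition''; what is really established (and all that is used afterwards) is the fully faithful embedding $\Art(R)\hookrightarrow\Art(\hRTT)$, and turning it into a genuine equivalence would require restricting the target to artinian $\hRTT$-modules on which the action is discrete (every element annihilated by some $U_I$, $I\in\TT$), which is exactly the unproved annihilation claim in disguise.
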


\begin{proof}
The proof of the first part of the lemma will go through similarly to the proof of Lemma \ref{Sharp}. 
The last part of the lemma holds from the definition of the $\hRTT$-module structure. 
\end{proof}

We set another family of ideals of $R$ as  
$$
\JJ = \{ \ \m _{1}^{k_1} \cdots \m_{s}^{k_s} \ | \ \m _i  \ \text{is a maximal ideal of}\ R, \ k_i \in \N \} . 
$$ 
It is also a directed set ordered by inclusion and we denote by $\hRJJ$ its inverse limit on the system via natural surjections.

\begin{proposition}\label{1st isom}
There is an isomorphism of topological rings; 
$$
\hRTT \cong \hRJJ .
$$  
\end{proposition}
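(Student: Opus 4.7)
The plan is to show that the family $\JJ$ is cofinal in $\TT$ (with both directed by reverse inclusion), so that the inverse limits over the two systems agree.

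First I would verify $\JJ \subseteq \TT$. Given $J = \m_1^{k_1}\cdots\m_s^{k_s} \in \JJ$ with the $\m_i$ pairwise distinct maximal ideals, the Chinese Remainder Theorem yields $R/J \cong \prod_{i=1}^s R/\m_i^{k_i}$. Each factor $R/\m_i^{k_i}$ is a noetherian local ring with nilpotent maximal ideal, hence artinian and of finite length, so $R/J$ has finite length. In particular the natural surjections in the $\TT$-system restrict to the natural surjections in the $\JJ$-system, giving a canonical continuous ring homomorphism $\hRTT \to \hRJJ$ induced by projection of the inverse system.

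Next, and this is the heart of the argument, I would establish cofinality: for every $I \in \TT$ there exists $J \in \JJ$ with $J \subseteq I$. Since $R/I$ has finite length, it has only finitely many associated (equivalently, minimal) primes, each of which is maximal, say $\m_1, \ldots, \m_s$. Therefore $\sqrt{I} = \m_1 \cap \cdots \cap \m_s$. Because $R/I$ is noetherian and its maximal ideal $\sqrt{I}/I$ is nilpotent (as $R/I$ is artinian), there exists $N \in \N$ with $(\sqrt{I})^N \subseteq I$. Then
\[
\m_1^N \cdots \m_s^N \subseteq (\m_1 \cap \cdots \cap \m_s)^N = (\sqrt{I})^N \subseteq I,
\]
and $J := \m_1^N \cdots \m_s^N \in \JJ$ does the job. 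Cofinality is the main technical point, but it is short once one exploits that finite length modules are artinian and that their radicals are finite intersections of maximals.

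Finally I would invoke the standard fact that an inverse limit over a cofinal sub-system is canonically isomorphic to the inverse limit over the whole system, and that this isomorphism is a homeomorphism when each term carries the discrete topology and the limits carry the subspace topology from the product. Cofinality of $\JJ$ in $\TT$ then yields a bicontinuous ring isomorphism $\hRTT \cong \hRJJ$, as desired. Since both constructions produce ring homomorphisms and the compatibility with projections is immediate from the way the transition maps were chosen, no additional work is needed to verify that the isomorphism respects the ring structure.
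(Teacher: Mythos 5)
Your proof is correct and takes essentially the same route as the paper: both establish that $\JJ$ is cofinal in $\TT$ (equivalently, that the two families define the same linear topology on $R$) and then invoke the standard fact that inverse limits over cofinal subsystems agree as topological rings. Your write-up is somewhat more detailed in the cofinality step, explicitly verifying $\JJ \subseteq \TT$ via the Chinese Remainder Theorem and justifying $(\sqrt{I})^N \subseteq I$ by artinianness of $R/I$, where the paper is terser, but the underlying argument is the same.
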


\begin{proof}
Let $I$ be an ideal in $\TT$. 
Note that $\Ass _R R/I = \{ \m _1 , \cdots , \m _s \}$ for some maximal ideals $\m _i$ of $R$. 
Since $\m _i$ are finitely generated, there exists a positive integer $k$ such that $(\m _1 \cdots \m_s )^k \subseteq I$. 
Thus, for each ideal $I$ in $\TT$, we can take some ideal $J$ in $\JJ$ such that $J \subseteq I$. 
Hence $\TT$ and $\JJ$ give the same topology on $R$, so that $\hRTT \cong \hRJJ$ as topological rings.  

\end{proof}

Now we consider a direct product of rings 
$$
\prod _{\n \in \max (R)} \hRn 
$$
where $\max (R)$ is the set of maximal ideals of $R$ and $\hRm$ is an $\m$-adic completion of $R$. 
We regard the ring as a topological ring by a product topology, namely the linear topology defined by ideals which are of the form $\m _{1}^{k_1}\hR _{\m _{1}} \times \cdots \times \m _{s}^{k_s}\hR _{\m _{s}} \times \prod _{\n \not= \m_1, \cdots , \m_s} \hR _{\n} $ for some $\m _{i} \in \max (R)$ and $k_i \in \N$.

\begin{proposition}\label{2nd isom}\cite[\S 2.13. Proposition 17]{Bou}
There is an isomorphism of topological rings 
$$
\hRJJ \cong \prod _{\n \in \max (R)} \hRn .
$$
\end{proposition}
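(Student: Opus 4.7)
The plan is to construct the isomorphism directly via the Chinese Remainder Theorem combined with a standard reorganization of inverse limits. First, I would observe that every ideal in $\JJ$ has the form $J = \m_1^{k_1} \cdots \m_s^{k_s}$ with the $\m_i$ distinct maximal ideals, so the factors $\m_i^{k_i}$ are pairwise comaximal. Hence CRT yields a canonical ring isomorphism
\[
R/J \;\cong\; \prod_{i=1}^{s} R/\m_i^{k_i},
\]
and these isomorphisms are compatible with the transition maps of the inverse system, both when the exponents $k_i$ are increased and when new maximal ideals are adjoined (the extra factors project to zero).

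Next I would reindex $\JJ$ by pairs $(F,\mathbf{k})$, where $F \subseteq \max(R)$ is finite and $\mathbf{k}=(k_\m)_{\m \in F} \in \N^{F}$, ordered so that $(F,\mathbf{k}) \leq (F',\mathbf{k}')$ iff $F \subseteq F'$ and $k_\m \leq k'_\m$ for every $\m \in F$. Since this reindexed set is cofinal with $\JJ$, the limit is unchanged. Computing the limit in two stages, I would first fix $F$ and let the exponents grow; by the compatibility above this gives $\prod_{\m \in F} \hRm$. Then I would take the limit over finite subsets $F$ ordered by inclusion, with transition maps given by projection onto the smaller product. The limit of this system is precisely the full product $\prod_{\m \in \max(R)} \hRm$, because an element of that product is nothing but a coherent choice of one element in each factor, equivalently a coherent family over all finite $F$.

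For the topological identification, I would check that the basic open neighborhoods of $0$ match: the kernel of $\hRJJ \to R/J$ with $J=\m_1^{k_1}\cdots\m_s^{k_s}$ corresponds, under the isomorphism above, to
\[
\m_1^{k_1}\hR_{\m_1} \times \cdots \times \m_s^{k_s}\hR_{\m_s} \times \prod_{\n \neq \m_1,\dots,\m_s} \hR_{\n},
\]
which is by definition a basic open neighborhood of $0$ in the product topology described just before the statement. Since these two collections of basic open neighborhoods coincide, the algebraic isomorphism is automatically a homeomorphism.

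The main obstacle is bookkeeping rather than conceptual: one must justify the interchange of limit and product when $\max(R)$ may be infinite. The key observation that makes this harmless is that every element of $\JJ$ involves only finitely many maximal ideals, which is exactly what is required to match the product topology (in which all but finitely many factors of a basic open set are the full ring). Once this is noted, the proof reduces to assembling the CRT isomorphisms above and invoking the cofinality of the reindexing.
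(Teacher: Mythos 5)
Your proof is correct, and it takes a genuinely different route from the paper's. The paper proceeds by hand: it writes down two explicit families of maps $\varphi_J : A \to R/J$ and $\psi_J : \hRJJ \to A/W_J$ (where $A = \prod_{\n} \hRn$), checks each is compatible with the inverse systems, verifies continuity by tracking preimages of the basic open ideals $W_J$ and $V_J$, and then observes $\varphi\circ\psi$ and $\psi\circ\varphi$ are identities by inspection. You instead reorganize the index set $\JJ$ as pairs $(F,\mathbf{k})$ and compute the inverse limit in two stages (first over the exponents $\mathbf{k}$ with $F$ fixed, yielding $\prod_{\m\in F}\hRm$, then over finite $F$ ordered by inclusion, yielding the full product); the CRT isomorphism $R/J \cong \prod_i R/\m_i^{k_i}$ and its compatibility with the transition maps do the algebraic work, and the topological match is verified by the same comparison of basic neighborhoods of $0$ that the paper uses. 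Your approach is cleaner conceptually and avoids writing out inverse maps explicitly, at the cost of having to justify the iterated-limit/cofinality step; the paper's approach is more pedestrian but leaves nothing implicit. One small point worth making explicit in your write-up: the reindexing $(F,\mathbf{k}) \mapsto \prod_{\m\in F}\m^{k_\m}$ is actually an order isomorphism onto $\JJ$ (by uniqueness of primary decomposition), not merely a cofinal embedding, and the second-stage limit $\varprojlim_F \prod_{\m\in F}\hRm$ equals the full product because a compatible family over finite subsets is determined by, and determines, its singleton components. Neither of these is a gap, just a place where the bookkeeping deserves a sentence.
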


\begin{proof}
Let $J$ be an ideal in $\JJ$ and suppose that $J = \m _{1}^{k_1} \cdots \m _{s}^{k_s}$. 
Note that $R/J$ is isomorphic to $\prod _{i=1}^{s}R/\m _{i}^{k_i}$ by Chinese remainder theorem. 
Let us set $A = \prod _{\n \in \max (R)} \hRn$. 
For all $J \in \J$, we define mappings $\varphi _{J} : A \to R/J$ by 
$$
\varphi _{J} : A \to R/J \cong \prod _{i=1}^{s}R/\m _{i}^k ; \quad (\hat{a}_\m )  \to (\bar{a_{\m _1 ^{k_1}}},  \cdots , \bar{a_{\m _s ^{k_s}}}). 
$$
Here we denote $(a _{\m ^k} + \m ^k) \in \hRm$ by $\hat{a}_\m $. 
It is easy to see that $\varphi = \{ \varphi _{J} \} _{J \in \JJ}$ is a morphism from $A$ to $\hRJJ$. 
Write $p _J : \hRJJ \to R/J$ for the projection.
We note that the topology of $\hRJJ$ coincides with the linear topology defined by $\{ \Ker \ p _J \} _{J \in \JJ}$ (cf. \cite[\S 8]{M}). 
Set $V_J = \ker \ p_J$.
For each $V_J$, we take the open set $W_J = \m _{1}^{k_1}\hR _{\m _{1}} \times \cdots \times \m _{s}^{k_s}\hR _{\m _{s}} \times \prod _{\n \not= \m_1, \cdots , \m_s} \hR _{\n} $ in $A$.   
Then $p_J \circ \varphi (W_J ) = 0$. 
Thus $\varphi (W_J ) \subseteq V_J$, so that $\varphi$ is continuous.

For each ideal $J \in \JJ$, we take an ideal $W_J$ of $A$ as above. 
As mentioned before, $A$ has a linear topology defined by $\{ W_J \} _{J \in \JJ}$, and $\varprojlim A/W_J = A$. 
We define mappings $\psi _{J} : \hRJJ \to A/W_J$ by 
$$
\psi _{J} : \hRJJ \to A/W_J \cong \prod _{i=1}^{s}R/\m _{i}^k \cong R/J ; \quad (a_J + J)_{J \in \JJ} \to \bar{a_{J}}. 
$$
We also see that $\psi _{J}$ induces the morphism $\psi = \{ \psi _{J} \} _{J \in \JJ}: \hRJJ \to A$ which is a continuous mapping. 
In fact, $W_J$ is just a kernel of the natural projection $A \to A/W_J$ and $\psi (V_J )$ goes to $0$ via the projections.  

Finally, we shall show $\varphi \circ \psi = 1_{\hRJJ}$ and $\psi \circ \varphi = 1_{A}$, but this is clear from the definition of $\varphi$ and $\psi$. 

\end{proof}

Combining Proposition \ref{1st isom} with Proposition \ref{2nd isom}, we can show the following.

\begin{corollary}\label{isom}
There are isomorphisms of topological rings 
$$
\hRTT \cong \hRJJ \cong \prod _{\n \in \max (R)} \hRn .
$$  
\end{corollary}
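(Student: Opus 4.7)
The plan is to simply concatenate the two isomorphisms established in the immediately preceding propositions, since the corollary is a direct consequence of them. First I would invoke Proposition \ref{1st isom}, which produces the topological ring isomorphism $\hRTT \cong \hRJJ$ coming from the fact that $\TT$ and $\JJ$ are cofinal with respect to each other as families of open ideals (every $I \in \TT$ contains some $J \in \JJ$ because $R/I$ has finite length and its associated primes are maximal). Then I would invoke Proposition \ref{2nd isom}, which supplies an isomorphism $\hRJJ \cong \prod_{\n \in \max(R)} \hRn$ of topological rings via the mutually inverse continuous maps $\varphi$ and $\psi$ described there.

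Composing these two isomorphisms yields the desired chain
\[
\hRTT \;\cong\; \hRJJ \;\cong\; \prod_{\n \in \max(R)} \hRn
\]
in the category of topological rings. Because each individual step is already a topological ring isomorphism, no further verification is needed: the composition of topological ring isomorphisms is again a topological ring isomorphism.

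There is essentially no obstacle here, as the corollary is purely a matter of stringing together two results already proved. The only point deserving brief mention is that all three objects are being regarded in the same category (topological rings with the linear topologies introduced above), so that the word \emph{isomorphism} has a consistent meaning across the chain; this is already guaranteed by the statements of Propositions \ref{1st isom} and \ref{2nd isom}.
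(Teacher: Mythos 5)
Your proposal is correct and matches the paper's own treatment exactly: the corollary is stated immediately after the sentence ``Combining Proposition \ref{1st isom} with Proposition \ref{2nd isom}, we can show the following,'' so the intended proof is precisely the concatenation of those two isomorphisms, as you describe. Nothing further is needed.
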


For closed prime ideals of $\prod _{\n \in \max (R)} \hRn$, we have the following result.

\begin{proposition}\label{closed prime}
Every proper closed prime ideal of $\prod _{\n \in \max (R)} \hRn$ is of the form $\p \times \prod _{\n \in \max (R), \m \not= \n  } \hR _{\n}$ for some prime ideal $\p \in \Spec \ \hRm$. 
Hence we can identify the set of closed prime ideals of $\prod _{\n \in \max (R)} \hRn$ with the disjoint union of $\Spec \ \hRm$, i.e. $\coprod _{\n \in \max (R)} \Spec \ \hRn$.
\end{proposition}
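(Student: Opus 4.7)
The plan is to analyze a proper closed prime ideal $\P$ of $A := \prod_{\n \in \max(R)} \hRn$ by means of the natural idempotents. For each $\m \in \max(R)$, let $e_\m \in A$ denote the tuple that equals $1$ in the $\m$-coordinate and $0$ elsewhere. Since $e_\m(1-e_\m) = 0 \in \P$ and $\P$ is prime, either $e_\m \in \P$ or $1-e_\m \in \P$; I want to show that the second alternative occurs for exactly one $\m$, and then use that $\m$ to read off $\P$.

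First I would verify that the subset $\bigoplus_{\n} \hRn \subseteq A$ of tuples with finite support is dense in the given topology: any basic open ideal $W$ of the form $\m_1^{k_1}\hR_{\m_1} \times \cdots \times \m_s^{k_s}\hR_{\m_s} \times \prod_{\n \ne \m_i}\hR_\n$ restricts only finitely many coordinates, so approximating $a \in A$ by the tuple agreeing with $a$ at $\m_1, \ldots, \m_s$ and vanishing elsewhere shows $a \in \bigl(\bigoplus_{\n}\hRn\bigr) + W$. Because $\sum_{\n} A e_\n = \bigoplus_{\n} \hRn$, if every $e_\m$ belonged to $\P$ then this dense ideal would be contained in the closed ideal $\P$, forcing $\P = A$, contradicting properness. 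Hence at least one $\m$ satisfies $1-e_\m \in \P$. Uniqueness follows from the computation that if $1-e_\m$ and $1-e_{\m'}$ both lay in $\P$ with $\m \ne \m'$, then since $e_\m e_{\m'} = 0$ one would have $1 = (1-e_\m) + (1-e_{\m'}) - (1-e_\m)(1-e_{\m'}) \in \P$, absurd.

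With the unique $\m$ fixed, the ideal $I := (1-e_\m)A$, which is naturally identified with $\{(a_\n) \in A : a_\m = 0\} \cong \prod_{\n \ne \m}\hRn$, is contained in $\P$. I would then check that the projection $A \to A/I \cong \hRm$ is continuous and that the quotient topology on $\hRm$ coincides with the $\m$-adic topology: a basic open ideal of $A$ projects onto $\m^k \hRm$ when $\m$ is among the restricted coordinates and onto all of $\hRm$ otherwise, so the two topologies agree. Consequently $\P/I$ is a closed prime ideal of $\hRm$; but $\hRm$ is complete noetherian local, so Krull's intersection theorem applied to $\hRm/\q$ shows that every prime $\q$ of $\hRm$ is automatically closed. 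Therefore $\P/I = \p$ for some $\p \in \Spec \hRm$, and lifting back yields $\P = \p \times \prod_{\n \ne \m}\hRn$, as claimed. The identification with $\coprod_{\n \in \max(R)} \Spec \hRn$ is then immediate since distinct choices of $\m$ or of $\p$ give visibly distinct ideals.

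The two steps where I expect the real content to lie are the density of $\bigoplus_{\n}\hRn$ in $A$ (this is what forces $\P$ to be ``supported'' on a single factor and is the unique place where closedness of $\P$ is genuinely invoked) and the identification of the quotient topology on $A/I$ with the $\m$-adic topology; without the latter, appealing to completeness of $\hRm$ to upgrade ``closed prime'' to ``arbitrary prime'' would not be legitimate. Everything else is formal manipulation with the idempotents $e_\m$.
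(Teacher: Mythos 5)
Your proof is correct and takes a cleaner route than the paper's at the central step. Both arguments are built on the coordinate idempotents $e_\m$ and the primality observation that at most one of them can lie outside $\P$. However, the paper defines $\varepsilon = \sum_{\l \ne \m} e_\l$ as an infinite sum, establishes its existence via summability and Cauchy's criterion (citing Bourbaki), and then invokes closedness of $\P$ to conclude $\varepsilon \in \P$ as a limit of partial sums that lie in $\P$. You avoid all of this: since $e_\m(1-e_\m) = 0 \in \P$ while $e_\m \notin \P$, primality alone gives $1-e_\m \in \P$, and $1-e_\m$ is already the element $\varepsilon$, so no convergence argument is needed. You therefore relocate the use of closedness entirely to ruling out the degenerate case where every $e_\m \in \P$, and you handle that carefully via density of $\bigoplus_\n \hRn$ in $A$, whereas the paper merely asserts this case ``can easily be shown'' to be contradictory. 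You also supply the converse direction (that each $\p \times \prod_{\n \ne \m}\hRn$ really is a closed prime) by identifying the quotient topology on $A/I$ with the $\m$-adic topology and appealing to Krull intersection; the paper leaves this implicit in its ``hence.'' Your route buys a sharper separation of where primality versus closedness is actually used and eliminates the appeal to summability in $A$.
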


\begin{proof}
Let us set $A = \prod _{\n \in \max (R)} \hRn$. 
We take an element $\hat{e}_\m = (\hat{e}_{\m, \n})$ of $A$ defined by 
$$
\hat{e}_{\m, \n}  =
\left\{
\begin{array}{ll}
\hat{1},&\text{if} \ \m = \n , \\
0,&\text{otherwise}.
\end{array}
\right.
$$
Let $\P$ be an arbitrary closed prime ideal of $A$.
Since $\hat{e}_\l \cdot \hat{e}_\m = 0$, we have $\hat{e}_\l \cdot \hat{e}_\m \in \P$. 
Thus if there is a maximal ideal $\m$ such that $\hat{e}_\m$ is not contained in $\P$, $\hat{e}_\l$ is in $\P$ whenever $\l$ is not equal to $\m$.   

Suppose that $\hat{e}_\m \not \in \P$. 
Then $\P$ contains $\hat{e}_\l$ for all maximal ideals $\l \not = \m$. 
First we shall show the family $(\hat{e}_{\l} )$ where $\l$ runs through all maximal ideals of $R$ except $\m$ is summable in $A$. 
Namely the sum $\varepsilon = \Sigma \hat{e}_\l$ is an element of $A$. 
For each neighborhood $W_J $ (of $0$) in $A$, we take a finite set of maximal ideals $H_{J} = \{ \m_1 , \cdots , \m_s , \m \}$. 
Then one can show that 
$$
\Sigma _{\l \in H} \hat{e}_\l \in W _J
$$
for every finite set of maximal ideals $H$ which does not intersect with $H_{J}$. 
Thus our claim follows from Cauchy's criterion (\cite[Chapter 3, \S 5, no. 2, Theorem 1]{Bou98}).   
Note that $\varepsilon$ is contained in $\P$ since $\P$ is a closed ideal. 
Thus $\varepsilon A = 0 \times \prod _{\n \not = \m } \hR _{\n}$ is an $A$-submodule of $\P$. 
Then we have the sequence:
$$
0 \to \P / \varepsilon A \to A/ \varepsilon A \cong \hR _{\m} \to A/\P \to 0.
$$   
Since $\P / \varepsilon A$ is a prime ideal of $\hRm$, we conclude that $\P$ is of the form $\p \times \prod _{\n \in \max (R), \m \not= \n  } \hR _{\n}$ for some prime ideal $\p \in \Spec \ \hRm$. 

Suppose that all of elements $\hat{e}_\m$ are contained in $\P$. 
Then we can easily show that $\P = A$ and this is a contradiction. 

\end{proof}


By virtue of Corollary \ref{isom}, we can equate the rings $\hRTT$, $\hRJJ$ and $\prod _{\n \in \max (R)} \hRn$. 
In the rest of this paper we always denote them by $\hR$ and identify the set of closed prime ideals of $\hR$ with $\coprod _{\n \in \max (R)} \Spec \ \hRn$.

Let $M$ be an artinian $R$-module. 
It follows from Lemma \ref{extended sharp} that $M$ is also an artinian $\hR$-module.

\begin{proposition}\label{closedness} 
Let $M$ be an artinian $R$-module. 
Then $\ann _{\hR} (M)$ is a closed ideal of $\hR$. 
\end{proposition}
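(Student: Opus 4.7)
My plan is to express $\ann_{\hR}(M)$ as the intersection $\bigcap_{m \in M}\ann_{\hR}(m)$ of elementwise annihilators and to show that each $\ann_{\hR}(m)$ is an open ideal of $\hR$. Since any open subgroup of a topological group is automatically closed (its complement is a disjoint union of cosets, each of which is open), and since an arbitrary intersection of closed sets is closed, this will give the conclusion.

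Fix $m \in M$, and let $\m_1, \ldots, \m_s$ be the finitely many maximal ideals occurring in $\Ass_R M$ as in Lemma \ref{Sharp}. That lemma furnishes an integer $k = k(m) \ge 1$ with $J_M^{k} m = 0$, where $J_M = \m_1 \cdots \m_s$. Under the identification $\hR \cong \prod_{\n \in \max(R)}\hR_{\n}$ of Corollary \ref{isom}, I consider the basic open ideal
$$
W(m) = \m_1^{k}\hR_{\m_1} \times \cdots \times \m_s^{k}\hR_{\m_s} \times \prod_{\n \neq \m_1, \ldots, \m_s}\hR_{\n}
$$
of $\hR$. The $\hR$-action on $m$ supplied by Lemma \ref{extended sharp} is obtained by reducing an element $\hat r \in \hR$ modulo an ideal $I \in \TT$ annihilating $m$ and then multiplying; taking $I = J_M^{k}$, this reduction factors as the projection $\hR \to \prod_{i=1}^{s}\hR_{\m_i}$ followed by the product of the natural surjections $\hR_{\m_i} \to R/\m_i^{k}$, identified with $R/J_M^{k}$ via the Chinese Remainder Theorem isomorphism $R/J_M^{k} \cong \prod_{i=1}^{s}R/\m_i^{k}$. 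Every element of $W(m)$ plainly maps to $0$ under this composite and therefore annihilates $m$, so $W(m) \subseteq \ann_{\hR}(m)$ and $\ann_{\hR}(m)$ is open.

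Hence each $\ann_{\hR}(m)$ is closed, and consequently $\ann_{\hR}(M) = \bigcap_{m \in M}\ann_{\hR}(m)$ is closed in $\hR$. The one technical point that needs care is the compatibility in the middle paragraph: one must verify that the $\hR$-module structure on $M$ furnished by Lemma \ref{extended sharp} genuinely matches the product decomposition of $\hR$ from Corollary \ref{isom}, so that the inclusion $W(m) \subseteq \ann_{\hR}(m)$ holds on the nose. I do \emph{not} expect to be able to replace $W(m)$ by a single open ideal independent of $m$, because an artinian module need not be annihilated by any single power of $J_M$ (e.g. $E_{\Z}(\Z/p\Z)$ over $\Z$); it is precisely this phenomenon that forces the expression of $\ann_{\hR}(M)$ as an intersection rather than an open ideal itself.
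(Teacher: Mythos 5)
Your proof is correct, and it takes a genuinely different route from the paper's. The paper works entirely inside $\hRTT$: it invokes the standard description of closure in a linearly topologized group, namely $\overline{\ann_{\hR}(M)} = \bigcap_{I \in \TT}\bigl(\ann_{\hR}(M) + U_I\bigr)$ with $U_I = \ker(\hRTT \to R/I)$, and then shows that any $\hat a$ in this intersection kills every $x \in M$ by choosing $I \in \TT$ with $Ix = 0$ and observing that $\hat a$ differs from an element of $\ann_{\hR}(M)$ by something in $U_I$, whose $I$-component lies in $I$. You instead write $\ann_{\hR}(M) = \bigcap_{m \in M}\ann_{\hR}(m)$, prove each $\ann_{\hR}(m)$ is open (hence closed, as an open subgroup of a topological abelian group), and conclude that the intersection is closed. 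Both arguments rest on the same essential facts --- every element of $M$ is annihilated by some $I \in \TT$, and the $\hR$-action on that element factors through $R/I$ --- but the paper's version is slightly more economical because it never leaves $\hRTT$ and so never needs to reconcile the module structure with the product decomposition $\hR \cong \prod_{\n}\hR_{\n}$. The compatibility you flag as the one delicate point in your argument is in fact guaranteed by the explicit maps $\varphi$ and $\psi$ constructed in the proof of Proposition \ref{2nd isom} (together with Proposition \ref{1st isom}), which are built precisely to commute with the projections to each $R/J$, so there is no gap. Your closing remark that $\ann_{\hR}(M)$ is typically closed without being open --- so one cannot hope for a single basic open ideal $W$ independent of $m$ --- is correct and nicely explains why both proofs must work pointwise in $M$.
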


\begin{proof}
We denote by $U_I$ a kernel of the natural projection $\hRTT \to R/I$ for each ideal $I \in \TT$. 
It suffices to prove that the inclusion $\ann _{\hR} (M) \supseteq \cap _{I \in \TT} (\ann _{\hR} (M) + U_I)$ holds. 
Take an arbitrary element $\hat{a} \in \cap _{I \in \TT} (\ann _{\hR} (M) + U_I )$. 
Then there exist some elements $\hat{b}_{I} \in \ann _{\hR} (M)$ and $\hat{c}_{I} \in U_I$ such that $\hat{a} = \hat{b}_{I} + \hat{c}_{I}$ for all $I$. 
Let $x$ be an element of $M$. 
Then there exists some ideal $I \in \TT$ such that $Ix = 0$. 
Thus
$$
\hat{a}x = (\hat{b}_{I} + \hat{c}_{I})x = \hat{c}_{I}x = c_{I, I} x \subseteq Ix = 0. 
$$
Hence $\hat{a}$ is an annihilator of $M$. 

\end{proof}

\begin{remark}\label{closedness radiacl}
Under the same assumption in Proposition \ref{closedness}, a radical of $\ann _{\hR}(M)$ is also a closed ideal. 
In fact, let $\hat{a}$ be an element of a closure of $\sqrt{\ann _{\hR}(M)}$. 
Take $x \in M$ and suppose that $Ix = 0$ for some $I \in \TT$. 
Since $\hat{b}_{I} \in \sqrt{\ann _{\hR} (M)}$, $\hat{b}_{I}^k \in \ann _{\hR} (M)$. 
Hence we see that $\hat{a}^k x = (\hat{b}_{I} + \hat{c}_{I})^k x = 0$ holds, so that $\hat{a} \in \sqrt{\ann _{\hR} (M)}$.
Consequently, $\Att _{\hR} M$ is a subset of the set of closed prime ideals of $\hR$. 
\end{remark}

\begin{lemma}\cite[Exercise 8.49]{S00}\label{torsion decomp}
Let $M$ be an artinian $R$-module. 
Set $\AssR M = \{ \m _{1}, \dots , \m _{s} \}$.
Then $M$ is direct sums of $\Gamma _{\m _i}(M)$, namely $M = \oplus _{i=1}^{s}\Gamma _{\m _i}(M)$. 
\end{lemma}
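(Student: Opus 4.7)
The plan is to prove this decomposition directly using the Chinese Remainder Theorem, exploiting the key fact from Lemma \ref{Sharp} that every element of $M$ is annihilated by some power of $J_M = \bigcap_{i=1}^s \m_i = \m_1 \m_2 \cdots \m_s$ (the last equality holding because the $\m_i$ are pairwise comaximal). Note that by the Remark following Lemma \ref{Sharp}, the maximal ideals $\m_i$ used to define $J_M$ coincide with $\AssR M = \{\m_1, \ldots, \m_s\}$, so this setup is consistent with the statement to be proved.

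For the equality $M = \sum_{i=1}^s \Gamma_{\m_i}(M)$: given $x \in M$, choose $n$ with $J_M^n x = 0$. Since $\m_1^n, \ldots, \m_s^n$ are pairwise comaximal, the Chinese Remainder Theorem yields $R/(\m_1^n \cdots \m_s^n) \cong \prod_{i=1}^s R/\m_i^n$. I would lift the orthogonal idempotents of the right-hand side to elements $e_1, \ldots, e_s \in R$ satisfying $e_i \in \bigcap_{j \neq i} \m_j^n$ for each $i$ and $\sum_i e_i - 1 \in \m_1^n \cdots \m_s^n$. Setting $x_i := e_i x$, one checks that
\[
\m_i^n x_i \subseteq \m_i^n \cdot \Bigl(\bigcap_{j \neq i} \m_j^n\Bigr) \cdot x \subseteq (\m_1^n \cdots \m_s^n) x = J_M^n x = 0,
\]
so $x_i \in \Gamma_{\m_i}(M)$. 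Moreover $\sum_i x_i = x$, because $\sum_i e_i - 1 \in J_M^n \subseteq \ann_R(x)$.

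For directness of the sum, suppose $y_1 + \cdots + y_s = 0$ with $y_i \in \Gamma_{\m_i}(M)$, and choose $n$ large enough that $\m_i^n y_i = 0$ for every $i$. For each fixed index $i$, the comaximality of $\m_i^n$ and $\bigcap_{j \neq i} \m_j^n$ provides a decomposition $1 = r_i + s_i$ with $r_i \in \m_i^n$ and $s_i \in \bigcap_{j \neq i} \m_j^n$. Multiplying the relation by $s_i$ annihilates every $y_j$ with $j \neq i$ and leaves $y_i$ fixed (since $r_i y_i = 0$), yielding $y_i = 0$. I do not foresee a serious obstacle here; the whole argument is a standard CRT/idempotent computation, and the only mildly subtle point is the identification of the associated primes with the maximal ideals defining $J_M$, which is already in place from the earlier remark.
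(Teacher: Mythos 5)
Your proposal is correct, and the spanning part of the argument ($M=\sum_i \Gamma_{\m_i}(M)$) is essentially the same as the paper's: both construct, via comaximality of the $\m_j^k$, elements $r_i$ (your $e_i$) lying in $\prod_{j\neq i}\m_j^k$ and summing to $1$ modulo $J_M^k$, then observe that $r_i x\in\Gamma_{\m_i}(M)$ because $\m_i^k\, r_i x\subseteq J_M^k x=0$. Where you diverge is the directness of the sum. The paper disposes of it in one line by invoking the fact that $\Ass_R(\Gamma_{\m_i}(M))=\{\m_i\}$ and these primes are pairwise distinct, whereas you give a hands-on computation: given a relation $\sum y_j=0$ with $\m_j^n y_j=0$, you write $1=r_i+s_i$ with $r_i\in\m_i^n$ and $s_i\in\bigcap_{j\neq i}\m_j^n$, multiply through by $s_i$, and conclude $y_i=s_i y_i=0$. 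Your route is more elementary and self-contained (it does not rely on any machinery about associated primes of intersections/sums of submodules, which one would otherwise have to unwind), at the cost of being a little longer; the paper's route is shorter but presumes the reader accepts the associated-prime argument without spelling it out. Both are valid, and yours is arguably the cleaner write-up for the direct-sum half.
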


\begin{proof}
It is clear that $M$ contains $\Sigma _{i=1}^{s} \Gamma _{\m _i}(M)$.

For each element $x \in M$, there is some positive integer $k$ such that $J^k x = 0$ where $J = \m _{1} \dots \m _{s}$. 
Since $\m _{1}, \dots \m _{s}$ are all distinct maximal ideals, we have 
$$
\m _{1}^{k} \m _{2}^{k} \cdots \m _{s-1}^{k} + \m _{1}^{k} \m _{2}^{k} \cdots \m _{s-2}^{k}\m _{s}^{k} + \cdots + \m _{2}^{k} \m _{3}^{k} \cdots \m _{s}^{k} = R.
$$
Thus there are elements $r_i \in \m _{1}^{k}\cdots \m _{i-1}^{k} \m _{i+1}^{k} \cdots \m _{s}^{k}$ such that $\Sigma _{i=1}^{s}r_i = 1$, and we get the equality $x = \Sigma _{i=1}^{s}r_{i}x$. 
Then we can show that each $r_i x$ is an element of $\Gamma _{\m _i}(M)$. 
In fact,  
$$
\m _{i}^{k}r_{i}x \subseteq \m _{1}^{k} \m _{2}^{k} \cdots \m _{s}^{k}x = 0. 
$$
Therefore we obtain $M = \Sigma _{i=1}^{s} \Gamma _{\m _i}(M)$.

It remains to show the sum above is a direct sum. 
This follows from the facts that $\AssR (\Gamma _{\m _{i}}(M) ) = \{ \m _i \}$ and all $\m _i$ are distinct.

\end{proof}

Let $M$ be an $\m$-torsion $R$-module. 
Then $M$ has the structure of an $\hR$-module and an $\hRm$-module. 
Note that the $\hRm$-module action on $M$ is identical with the action by means of the natural inclusion $\hRm \to \prod _{\n \in \max (R)} \hRn \cong \hR$. 
We also note from Lemma \ref{extended sharp} or Lemma \ref{Sharp} that $N$ is an $\hR$-submodule (resp. a quotient $\hR$-module) of $M$ if and only if it is an $\hRm$-submodule (resp. a quotient $\hRm$-module) of $M$.

\begin{proposition}\label{prop A}
Let $M$ be an $\m$-torsion $R$-module. 
Then 
$$\Att _{\hR} M = \Att _{\hRm} M$$ 
as a subset of $\coprod _{\n \in \max (R)} \Spec \ \hat{R_{\n }}$. 
\end{proposition}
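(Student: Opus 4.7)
The plan is to reduce the $\hR$-secondary structure on an $\m$-torsion module to its $\hRm$-secondary structure by exploiting the product decomposition $\hR \cong \prod_{\n \in \max(R)} \hRn$ from Corollary \ref{isom}. The key observation is that because $M$ is $\m$-torsion, every factor $\hRn$ with $\n \ne \m$ acts as zero on $M$: for $\hat{r} = (\hat{r}_\n)_\n \in \hR$ and $x \in M$, the action coincides with $\hat{r}_\m \cdot x$ via the $\hRm$-structure, since the inclusion $\hRm \hookrightarrow \hR$ is the one noted just before the statement.

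First I would verify, for any $\m$-torsion $R$-module (or quotient) $N$, the equality
\[
\ann_{\hR}(N) \;=\; \ann_{\hRm}(N)\times\prod_{\n \ne \m}\hRn
\]
as ideals of $\hR$ under the identification of Corollary \ref{isom}, which is immediate from the previous remark. Taking radicals, the closed prime $\sqrt{\ann_{\hR}(N)}$ corresponds, under the identification in Proposition \ref{closed prime}, to the prime $\sqrt{\ann_{\hRm}(N)} \in \Spec\hRm$.

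Next I would check the secondary condition. Let $N$ be a quotient of $M$; by the remark preceding the proposition, $\hR$-quotients and $\hRm$-quotients of $M$ coincide, so this is unambiguous. For any $\hat{r} = (\hat{r}_\n) \in \hR$, multiplication by $\hat{r}$ on $N$ equals multiplication by $\hat{r}_\m$ via the $\hRm$-structure. Hence the map is surjective (resp.\ nilpotent) on $N$ as an $\hR$-module if and only if it is surjective (resp.\ nilpotent) as an $\hRm$-module, where one ranges over $\hRm$ (any $\hRm$-element is the $\m$-component of some $\hR$-element, and conversely only the $\m$-component matters). Thus $N$ is $\p$-secondary over $\hR$ if and only if $N$ is $\p'$-secondary over $\hRm$, where $\p$ and $\p'$ correspond under the identification of Proposition \ref{closed prime}.

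Combining these, $\p \in \Att_{\hR} M$ iff $M$ admits an $\hR$-quotient which is $\p$-secondary, iff (by the equivalence of quotients and of secondary structures) $M$ admits an $\hRm$-quotient which is $\p'$-secondary, iff $\p' \in \Att_{\hRm} M$. Under the identification of $\Spec\hRm$ as a component of $\coprod_{\n \in \max(R)} \Spec\hRn$, this proves $\Att_{\hR} M = \Att_{\hRm} M$. I do not anticipate a substantive obstacle; the only care needed is in checking that the closed-prime identification of Proposition \ref{closed prime} really does intertwine $\ann_{\hR}$ with $\ann_{\hRm}$ for $\m$-torsion modules, and this follows at once from the product decomposition.
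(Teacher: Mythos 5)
Your proof is correct and takes essentially the same approach as the paper: both reduce the $\hR$-secondary structure to the $\hRm$-secondary structure using the fact that on an $\m$-torsion module all factors $\hRn$ with $\n\ne\m$ act as zero, so annihilators and quotients are controlled entirely by the $\m$-component. Your version states the annihilator decomposition $\ann_{\hR}(N)=\ann_{\hRm}(N)\times\prod_{\n\ne\m}\hRn$ as a clean lemma, whereas the paper makes the equivalent point by computing directly with the idempotents $\hat e_\n$; this is a stylistic difference, not a different argument.
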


\begin{proof}
Let $\P \in \Att _{\hR} M$ and $W$ be a $\P$-secondary quotient $\hR$-module of $M$. 
Note that $W$ is also a quotient $\hRm$-module of $M$. 
As noted in Remark \ref{closedness radiacl}, $\P = \sqrt{\ann _{\hR} (W)}$ is a closed prime ideal. 
Thus $\P$ is of the form $\p \times \prod _{\l \not= \n} \hR _\n$ where $\p$ is a prime ideal of $\hR _\l$ for some maximal ideal $\l$. 
First we shall show $\l = \m$. 
For this, we show $\hat{e}_\n \in \P$ if $\n \not= \m$ (see Proposition \ref{closedness} for the definition $\hat{e}_\n$). 
Let $x$ be an element of $W$ and suppose that $\m ^k x = 0$. 
The $\hR$-module action $\hat{r}x$ is defined by $r_{\m^k}x$ for each $\hat{r} \in \hR$. 
Then we see that  
$$
\hat{e}_\n x = \hat{e}_{\n , \m}x = 0_{\m^k} x = 0.
$$ 
Thus, if $\n \not= \m$, $e _\n$ is contained in $\ann _{\hR} (W)$, so that in $\sqrt{\ann _{\hR} (W)} = \P$. 
Hence $\hat{e}_\n \hR = \hR _{\n} \subseteq \P $ whenever $\n \not= \m$, so that $\l$ must be $\m$. 
Consequently, $\p$ is a prime ideal of $\hRm$.

Since the $\hRm$-action on $W$ is the same as the action via the natural inclusion $\hRm \to \hR$, we have $\sqrt{\ann _{\hR} (W)} \cap \hRm = \sqrt{\ann _{\hRm}(W)}$.  
Therefore $\p \in \Att _{\hRm} M$.

Conversely, let $\q$ be an attached prime ideal of $M$ as $\hRm$ modules and $V$ be a $\q$-secondary quotient $\hRm$-module of $M$. 
Then $V$ is also an $\hR$-quotient module of $M$, and $\Q = \q \times \prod _{\n \not= \m}\hR _{\n}$ is equal to $\sqrt{\ann _{\hR} (V)}$. 
Hence $\Q \in \Att _{\hR} M$. 

\end{proof}

Combing Proposition \ref{prop A} with Lemma \ref{torsion decomp}, we have the following corollary. 

\begin{corollary}\label{cor A}
Let $M$ be an artinian $R$-module. 
Then 
$$
\Att _{\hR} M = \coprod _{\m \in \AssR M}\Att _{\hRm} \Gamma _{\m} (M ) 
$$ 
as a subset of $\coprod _{\n \in \max (R)} \Spec \ \hat{R_\n}$.
\end{corollary}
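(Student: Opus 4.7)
The plan is to combine the direct sum decomposition from Lemma \ref{torsion decomp} with Proposition \ref{prop A}, after observing that attached primes behave well on finite direct sums via Remark \ref{remark attach}(2).

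First I would apply Lemma \ref{torsion decomp} to obtain the decomposition
\[
M = \bigoplus_{\m \in \AssR M} \Gamma_{\m}(M),
\]
and note, via Lemma \ref{extended sharp}, that every $R$-submodule of $M$ is an $\hR$-submodule, so this is simultaneously a direct sum of $\hR$-modules. Each $\Gamma_{\m}(M)$ is then an $\m$-torsion $R$-module to which Proposition \ref{prop A} will apply.

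Next I would establish the general fact that for a finite direct sum of $\hR$-modules $N = A \oplus B$,
\[
\Att_{\hR}(N) = \Att_{\hR}(A) \cup \Att_{\hR}(B).
\]
This follows from Remark \ref{remark attach}(2): the inclusion $\Att_{\hR}(N) \subseteq \Att_{\hR}(A) \cup \Att_{\hR}(N/A) = \Att_{\hR}(A) \cup \Att_{\hR}(B)$ is immediate, while $\Att_{\hR}(A) = \Att_{\hR}(N/B) \subseteq \Att_{\hR}(N)$ and symmetrically for $B$. Iterating (finite induction on $|\AssR M|$) yields
\[
\Att_{\hR} M = \bigcup_{\m \in \AssR M} \Att_{\hR} \Gamma_{\m}(M).
\]

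Then I would apply Proposition \ref{prop A} to each summand, since $\Gamma_{\m}(M)$ is $\m$-torsion, to obtain $\Att_{\hR}\Gamma_{\m}(M) = \Att_{\hRm}\Gamma_{\m}(M)$ inside $\coprod_{\n \in \max(R)}\Spec \hRn$. Finally I would check that the union is disjoint: under the identification of closed prime ideals of $\hR$ with $\coprod_{\n} \Spec \hRn$ (Proposition \ref{closed prime}), an element of $\Att_{\hRm}\Gamma_{\m}(M)$ lives in the copy of $\Spec \hRm$ indexed by $\m$, and distinct maximal ideals of $R$ index distinct components of the coproduct. Replacing the union by a coproduct gives the stated formula.

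I do not expect a genuine obstacle here; this is a straightforward assembly. The only point requiring care is making sure the direct sum decomposition of Lemma \ref{torsion decomp} is respected by the $\hR$-module structure, but this is guaranteed by Lemma \ref{extended sharp}, which states that $R$-submodules and $\hR$-submodules of an artinian module coincide. Everything else is bookkeeping via Remark \ref{remark attach}(2) and the identifications already set up in Proposition \ref{closed prime} and Proposition \ref{prop A}.
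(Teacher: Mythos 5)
Your proof is correct and takes essentially the same route as the paper: the paper simply states that the corollary follows by combining Proposition \ref{prop A} with Lemma \ref{torsion decomp}, and your argument fills in precisely those details (the additivity of $\Att_{\hR}$ on finite direct sums via Remark \ref{remark attach}(2), the application of Proposition \ref{prop A} to each $\m$-torsion summand, and the disjointness coming from the identification in Proposition \ref{closed prime}).
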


%
%
Let us state the result which is a key to classify the subcategory of the category of noetherian modules.

\begin{theorem}\cite[Corollary 4.4]{T08}\cite[Corollary 2.6]{K08}\label{Krause}
Let $M$ and $N$ be finitely generated $R$-modules. 
Then $M$ can be generated from $N$ via taking submodules and extension if and only if $\Ass _{R} M \subseteq \Ass _{R} N$. 
\end{theorem}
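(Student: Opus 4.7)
The plan is to deduce the statement from Theorem \ref{Theorem B}, the bijection between subcategories of $\modR$ closed under submodules and extensions and arbitrary subsets of $\SpecR$. Write $\X(N)$ for the smallest such subcategory containing $N$. The content of the theorem is exactly that $\X(N) = \{\,M \in \modR : \Ass_R M \subseteq \Ass_R N\,\}$, so it suffices to identify these two subcategories of $\modR$.

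For the forward implication, I would argue by induction on the number of construction steps that $\Ass_R X \subseteq \Ass_R N$ for every $X \in \X(N)$. The base case $X=N$ is trivial. The inductive step uses two standard properties of associated primes: $\Ass_R X' \subseteq \Ass_R X$ whenever $X'$ is a submodule of $X$, and $\Ass_R X \subseteq \Ass_R X_1 \cup \Ass_R X_2$ for any short exact sequence $0 \to X_1 \to X \to X_2 \to 0$. Combining these yields $\Psi(\X(N)) = \bigcup_{X \in \X(N)} \Ass_R X \subseteq \Ass_R N$, and since $N \in \X(N)$ the reverse inclusion is immediate, giving $\Psi(\X(N)) = \Ass_R N$.

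For the reverse implication, I apply Theorem \ref{Theorem B} to the subcategory $\X(N)$: the bijection forces $\X(N) = \Phi(\Psi(\X(N))) = \Phi(\Ass_R N) = \{\,M \in \modR : \Ass_R M \subseteq \Ass_R N\,\}$. In particular, any finitely generated $M$ satisfying $\Ass_R M \subseteq \Ass_R N$ already lies in $\X(N)$, which is the non-trivial half of the equivalence.

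The main obstacle, or rather the point where all the work is hidden, is the appeal to Theorem \ref{Theorem B}. A direct construction of $M$ from $N$ using only submodules and extensions is possible but delicate, because a naive prime filtration of $M$ can have subquotients $R/\p$ with $\p \in \Supp M \setminus \Ass_R N$, so one cannot simply stitch together the tautological embeddings $R/\p \hookrightarrow N$ for $\p \in \Ass_R N$. Once Theorem \ref{Theorem B} is in hand, however, this difficulty is entirely finessed and the argument reduces to a formal manipulation of the bijection $(\Psi,\Phi)$.
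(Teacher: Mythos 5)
The paper does not actually prove this statement; it cites it as known (\cite[Corollary~4.4]{T08}, \cite[Corollary~2.6]{K08}), so there is no in-paper proof to compare against. Your derivation is correct and is in fact the standard way this corollary is obtained from the bijection in Theorem~\ref{Theorem B}: the forward implication is the elementary induction on construction steps using $\Ass_R X' \subseteq \Ass_R X$ for submodules and $\Ass_R X \subseteq \Ass_R X_1 \cup \Ass_R X_2$ for extensions, giving $\Psi(\X(N)) = \Ass_R N$, and the reverse implication is the formal step $\X(N) = \Phi\Psi(\X(N)) = \Phi(\Ass_R N)$ coming from $\Phi$ and $\Psi$ being mutually inverse. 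This matches how Takahashi deduces his Corollary~4.4 from his Theorem~4.1 (the source of Theorem~\ref{Theorem B}), so your argument is sound and essentially identical to the intended one; you are also right that the real content is concentrated in the surjectivity of $\Psi$ restricted to subcategories of the form $\X(N)$, i.e.\ in Theorem~\ref{Theorem B} itself, which your proof treats as a black box.
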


The following lemma is due to Sharp \cite{S76}.

\begin{lemma}\cite[3.5.]{S76}\label{att ass}
Let $(R, \m _{1}, \cdots , \m _{s})$ be a commutative noetherian complete semi-local ring and set $E = \oplus _{i = 1}^{s} E_{R}(R/\m_{i})$.
For an artinian $R$-module $M$, we have  
$$
\Att _{R} M = \Ass _{R} \Hom _R (M, E). 
$$ 
\end{lemma}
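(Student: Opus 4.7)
The plan is to deduce the identity from Matlis duality. Because $R$ is a noetherian complete semi-local ring, the functor $(-)^{\vee} = \Hom_R(-, E)$ is an exact contravariant equivalence between $\Art(R)$ and $\mod(R)$, with $M^{\vee\vee} \cong M$ canonically. Writing $N = M^{\vee}$, the goal reduces to showing $\Att_R M = \Ass_R N$.

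The crucial intermediate claim I would establish is the following dictionary: for a nonzero artinian $R$-module $W$, $W$ is $\p$-secondary if and only if $\Ass_R W^{\vee} = \{\p\}$. Exactness of $(-)^{\vee}$ converts ``multiplication by $a$ is surjective on $W$'' into ``multiplication by $a$ is injective on $W^{\vee}$,'' preserves nilpotence, and preserves the annihilator ideal. Combining this with the artinian hypothesis on $W$ (which promotes element-wise nilpotence of each generator of $\p$ to $\p^{N} W = 0$ for some $N$) pins down $\Ass_R W^{\vee}$ to be exactly $\{\p\}$: elements outside $\p$ act as nonzerodivisors on $W^{\vee}$ and so avoid every associated prime, while each minimal associated prime of the finitely generated module $W^{\vee}$ must contain $\p$ because $\p^{N}$ annihilates $W^{\vee}$. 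Hence all associated primes are simultaneously contained in and contain $\p$, forcing equality.

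Given this correspondence, the main argument is short. A prime $\p$ lies in $\Att_R M$ exactly when there is a surjection $M \twoheadrightarrow W$ with $W$ nonzero and $\p$-secondary. Dualizing turns such a surjection into an injection $W^{\vee} \hookrightarrow N$, and by the dictionary $W^{\vee}$ is a submodule of $N$ whose unique associated prime is $\p$, yielding $\p \in \Ass_R N$. Conversely, from $\p \in \Ass_R N$ one obtains an embedding $R/\p \hookrightarrow N$; dualizing produces a surjection $M \twoheadrightarrow (R/\p)^{\vee}$, and since $R/\p$ is $\p$-coprimary the reverse direction of the dictionary shows $(R/\p)^{\vee}$ is $\p$-secondary, so $\p \in \Att_R M$.

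The hardest part is the secondary-versus-coprimary dictionary: one must simultaneously exploit that $W$ is artinian (so that pointwise nilpotence of multiplication by elements of $\p$ upgrades to a uniform power of $\p$ annihilating $W$) and that $W^{\vee}$ is finitely generated (so that its set of associated primes is well-behaved and determined by minimal primes of the annihilator together with zerodivisor considerations), and finally confirm that the prime $\p$ extracted from $W^{\vee}$ really coincides with the one attached to $W$.
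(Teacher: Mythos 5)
The paper does not give a proof of this lemma; it is cited directly to Sharp \cite[3.5]{S76}, so there is no internal argument to compare against. Your Matlis-duality proof is, however, correct and is essentially the standard (indeed Sharp's) route: the whole content is the dictionary that for a nonzero artinian $W$ over the complete semi-local ring $R$, $W$ is $\p$-secondary if and only if $W^{\vee}$ is $\p$-coprimary, i.e.\ $\Ass_R W^{\vee} = \{\p\}$, after which the two inclusions follow by dualizing a secondary quotient of $M$ to a coprimary submodule of $M^{\vee}$ and, conversely, an embedding $R/\p \hookrightarrow M^{\vee}$ to a surjection $M \twoheadrightarrow (R/\p)^{\vee}$.

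One small inaccuracy of attribution, not a gap: you say the artinian hypothesis on $W$ is what upgrades element-wise nilpotence of $\p$ on $W$ to $\p^{N}W = 0$. In fact that upgrade uses only that $\p$ is finitely generated (i.e.\ that $R$ is noetherian); if $\p=(a_1,\dots,a_r)$ and $a_i^{n_i}W=0$ for each $i$, then $\p^{N}W=0$ for $N=\sum(n_i-1)+1$ regardless of whether $W$ is artinian. The artinian hypothesis is needed elsewhere --- to ensure $W^{\vee}$ is finitely generated (so that $\Ass_R W^{\vee}$ equals the zerodivisors union and is controlled by minimal primes of the annihilator) and to have $W^{\vee\vee}\cong W$. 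With that minor reallocation of credit, the argument is sound and complete.
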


The next claim is reasonable as the artinian analogue of Theorem \ref{Krause}.

%
%
\begin{theorem}\label{theorem 1}
Let $M$ and $N$ be artinian $R$-modules. 
Then $M$ can be generated from $N$ via taking quotient modules and extensions as $R$-modules if and only if $\Att _{\hR} M \subseteq \Att _{\hR} N$. 
\end{theorem}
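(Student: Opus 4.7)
The plan is to prove the two implications separately. The forward direction $(\Rightarrow)$ is formal: by Remark \ref{remark attach}(2), which is valid over any commutative ring and in particular over $\hR$, the class of artinian $R$-modules $X$ satisfying $\Att_{\hR} X \subseteq \Att_{\hR} N$ is closed under quotient modules and extensions and contains $N$, so it contains every module produced from $N$ by iterating these operations. Here one appeals to Lemma \ref{extended sharp} to note that ``generated from $N$ by quotient $R$-modules and extensions'' is synonymous with ``generated from $N$ by quotient $\hR$-modules and extensions.''

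For the converse $(\Leftarrow)$, the strategy is to decompose along maximal ideals and reduce at each maximal ideal to the finitely generated analogue (Theorem \ref{Krause}) via Matlis duality. By Lemma \ref{torsion decomp}, $M = \oplus_{\m} \Gamma_\m (M)$ and $N = \oplus_{\m} \Gamma_\m (N)$, where $\m$ ranges over a finite set of maximal ideals of $R$. Corollary \ref{cor A} identifies $\Att_{\hR}$ with the disjoint union of the local $\Att_{\hRm}$'s inside $\coprod_{\n \in \max (R)} \Spec \hRn$. Since the hypothesized inclusion respects this disjoint decomposition, I first deduce that $\Ass_R M \subseteq \Ass_R N$, and then that $\Att_{\hRm} \Gamma_\m (M) \subseteq \Att_{\hRm} \Gamma_\m (N)$ for each $\m \in \Ass_R M$.

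Next I work locally at each such $\m$, where $\hRm$ is a noetherian complete local ring and $\Gamma_\m (M), \Gamma_\m (N)$ are artinian $\hRm$-modules. Applying Matlis duality $(-)^\vee = \Hom_{\hRm}(-, E_{\hRm}(\hRm/\m\hRm))$ together with Lemma \ref{att ass}, the local inclusion of attached primes becomes $\Ass_{\hRm}(\Gamma_\m (M))^\vee \subseteq \Ass_{\hRm}(\Gamma_\m (N))^\vee$, an inclusion between finitely generated $\hRm$-modules. Theorem \ref{Krause} then produces $(\Gamma_\m (M))^\vee$ from $(\Gamma_\m (N))^\vee$ by submodules and extensions, and Lemma \ref{duality} dualizes this back to the assertion that $\Gamma_\m (M)$ is produced from $\Gamma_\m (N)$ by quotient modules and extensions. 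Finally, $\Gamma_\m (N)$ is a direct summand, hence a quotient, of $N$, and $M$ is a finite direct sum of the $\Gamma_\m (M)$'s, hence obtained from them by iterated extensions, so $M$ is produced from $N$ by quotient modules and extensions as required.

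The step I expect to be the main obstacle is not conceptual but bookkeeping: keeping the three ring actions ($R$, $\hR$, and the various local completions $\hRm$) aligned, and checking that the notions of submodule, quotient, extension, and ``generation by these operations'' transfer faithfully among them. The compatibility statements in Lemma \ref{Sharp} and Lemma \ref{extended sharp}, combined with Corollary \ref{cor A} and the product decomposition $\hR \cong \prod_{\n \in \max (R)} \hRn$ from Corollary \ref{isom}, are precisely what is needed to make these identifications, and with them in place the argument should go through cleanly.
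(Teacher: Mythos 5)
Your proposal is correct and follows essentially the same route as the paper: decompose $M$ and $N$ into $\m$-torsion pieces via Lemma \ref{torsion decomp}, transfer the attached-prime hypothesis to local $\Ass_{\hRm}$ data via Corollary \ref{cor A}, Matlis duality, and Lemma \ref{att ass}, invoke Theorem \ref{Krause} over each complete local ring $\hRm$, and dualize back with Lemma \ref{duality}. The only difference is presentational — you make explicit the preliminary deduction $\Ass_R M \subseteq \Ass_R N$ and the transfer between $R$- and $\hR$-module operations, which the paper leaves implicit; this is careful bookkeeping, not a new idea.
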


\begin{proof}
Suppose that $M$ is contained in $\quotext _R (N)$. 
It is clear from the property of attached prime ideals (Remark \ref{remark attach}) that $\Att _{\hR} M \subseteq \Att _{\hR} N$ holds.

Conversely, suppose that $\Att _{\hR} M \subseteq \Att _{\hR} N$.
First, we shall show that we may assume that $M$ and $N$ are $\m$-torsion $R$-modules for some maximal ideal $\m$. 
In fact, $M$ (resp. $N$) can be decomposed as $M = \oplus _{\m \in \AssR M}\Gamma _{\m }(M)$ (resp. $N = \oplus _{\n \in \AssR N}\Gamma _{\n }(N)$) and the assumption implies that $\Att _{\hRm} \Gamma _{\m} (M ) \subseteq \Att _{\hRm} \Gamma _{\m} (N )$ for all $\m \in \AssR M$ by Corollary \ref{cor A}.   
If we show that $\Gamma _{\m} (M )$ is contained in $\quotext _R (\Gamma _{\m} (N ))$, we can get the assertion since $\quotext _R (N)$ is closed under direct sums and direct summands.

Let $M$ and $N$ be $\m$-torsion $R$-modules and $E$ be an injective hull of $\hRm /\m \hRm $ as an $\hRm$-module. 
Since $M$ and $N$ are also artinian $\hRm$-modules, $M^{\vee}$ and $N^{\vee}$ are finitely generated $\hRm$-modules by Matlis duality, where $(-)^\vee = \Hom _{\hRm}(- , E)$. 
Since $\Att _{\hRm} M$ (resp. $\Att _{\hRm} N$) is equal to $\Ass _{\hRm} M^{\vee}$ (resp. $\Ass _{\hRm} N^{\vee}$) (Lemma \ref{att ass}), the inclusion 
$$
\Ass _{\hRm} M^{\vee} \subseteq \Ass _{\hRm} N^{\vee}
$$
holds.   
By virtue of Theorem \ref{Krause}, we conclude that $M^{\vee}$ can be generated from $N^{\vee}$ via taking submodules and extensions, i.e. $M^{\vee} \in \subext _{\hRm}(N^{\vee})$. 
Hence it follows from Matlis duality and Lemma \ref{duality} that  
$$
M^{\vee \vee} \cong M \in \subext _{\hRm}(N^{\vee})^{\vee} = \quotext _{\hRm}(N). 
$$
Since artinian $\hRm$-modules are also artinian $R$-modules (cf. Lemma \ref{Sharp}), we conclude that $M \in \quotext _{R}(N)$.

\end{proof}

We define by $\Psi $ the map sending a subcategory $\X$ of $\ArtR$ to 
$$
\Att \X = \cup _{M \in \X}\Att _{\hR} M
$$ 
and by $\Phi$ the map sending a subset $S$ of $\coprod _{\n \in \max (R)} \Spec \ \hat{R_\n}$ to 
$$
\{ M \in \ArtR \ | \Att _{\hR} M \subseteq S \}.
$$

Note from Corollary \ref{cor A} that $\Psi (\X )$ determine the subset of $\coprod _{\n \in \max (R)} \Spec \ \hat{R_\n}$. %
On the other hand, it follows from Remark \ref{remark attach} (2) that $\Phi (S)$ is closed under quotient modules and extensions.

\vspace{8pt}
Now we state a main theorem of this section.

%
%
\begin{theorem}\label{classification}
Let $R$ be a commutative noetherian ring. 
Then $\Psi$ and $\Phi$ induce a bijection between the set of subcategories of $\ArtR$ which are closed under quotient modules and extensions and the set of subsets of $\coprod _{\n \in \max (R)} \Spec \ \hat{R_\n}$. 

Moreover, they also induce a bijection between the set of Serre subcategories of $\ArtR$ and the set of specialization closed subsets of $\coprod _{\n \in \max (R)} \Spec \ \hat{R_\n}$. 
\end{theorem}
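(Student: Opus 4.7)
The plan is to prove the first bijection as a direct consequence of Theorem~\ref{theorem 1}, and then to bootstrap to the Serre refinement by decomposing artinian modules into $\m$-torsion parts via Lemma~\ref{torsion decomp}, passing to Matlis duals over each complete local ring $\hRm$, and invoking Takahashi's noetherian classification (Theorem~\ref{Theorem B}) componentwise. The inclusions $\X \subseteq \Phi\Psi(\X)$ and $\Psi\Phi(S) \subseteq S$ follow at once from the definitions of $\Psi$ and $\Phi$, so the real content lies in the reverse inclusions together with the extra submodule-closure needed for Serre.

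For $\Phi\Psi(\X) \subseteq \X$ when $\X$ is closed under quotient modules and extensions, I would take $M \in \Phi\Psi(\X)$. The set $\Att_{\hR}(M) = \{\P_1, \ldots, \P_r\}$ is finite since $M$ is artinian, and for each $\P_i$ I can pick $N_i \in \X$ with $\P_i \in \Att_{\hR}(N_i)$. The direct sum $N = N_1 \oplus \cdots \oplus N_r$ lies in $\X$ by iterated split extensions, and satisfies $\Att_{\hR}(M) \subseteq \Att_{\hR}(N)$, so Theorem~\ref{theorem 1} gives $M \in \quotext_R(N) \subseteq \X$. For $S \subseteq \Psi\Phi(S)$, given $\P \in S$, Proposition~\ref{closed prime} identifies $\P$ with a pair $(\m, \p)$ where $\m \in \max(R)$ and $\p \in \Spec \hRm$; the artinian module $W := \Hom_{\hRm}(\hRm/\p,\, E_{\hRm}(\hRm/\m\hRm))$ has Matlis bidual $\hRm/\p$, so Lemma~\ref{att ass} together with Proposition~\ref{prop A} forces $\Att_{\hR}(W) = \{\P\} \subseteq S$, exhibiting $W \in \Phi(S)$ with $\P \in \Att_{\hR}(W)$.

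For the Serre refinement, I would first observe that since distinct components of $\coprod_{\n \in \max(R)} \Spec \hRn$ are pairwise incomparable under inclusion, a subset is specialization closed if and only if its intersection with each $\Spec \hRn$ is specialization closed there. If $\X$ is Serre, the $\m$-torsion part $\X_\m := \{\Gamma_\m(M) : M \in \X\}$ is a Serre subcategory of $\Art(\hRm)$ (using Lemma~\ref{torsion decomp} and submodule closure), and Lemma~\ref{duality} turns its Matlis dual into a Serre subcategory of $\mod(\hRm)$, whose $\Ass$-set is specialization closed in $\Spec \hRm$ by Theorem~\ref{Theorem B}; Lemma~\ref{att ass} and Corollary~\ref{cor A} together identify this $\Ass$-set with $\Psi(\X) \cap \Spec \hRm$. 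Conversely, if $S$ is specialization closed and $N \hookrightarrow M \in \Phi(S)$, then $\Gamma_\m(N) \hookrightarrow \Gamma_\m(M)$ dualizes to a surjection of finitely generated $\hRm$-modules, and the Serre subcategory of $\mod(\hRm)$ classified by $S \cap \Spec \hRm$ contains the Matlis dual of $\Gamma_\m(M)$ and hence, by quotient closure, also the Matlis dual of $\Gamma_\m(N)$; reassembling via Corollary~\ref{cor A} gives $\Att_{\hR}(N) \subseteq S$, so $\Phi(S)$ is submodule-closed, i.e., Serre.

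The main obstacle is the bookkeeping across three parallel identifications --- between closed prime ideals of $\hR$ and pairs $(\m, \p)$ via Proposition~\ref{closed prime}, between artinian $R$-modules and their $\m$-torsion decompositions via Lemma~\ref{torsion decomp}, and between artinian and finitely generated modules over each $\hRm$ via Matlis duality --- and verifying that Proposition~\ref{prop A} and Corollary~\ref{cor A} glue these translations together compatibly, so that specialization closure in each local component assembles into specialization closure globally.
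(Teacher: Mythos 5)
Your proof is correct, and the first bijection is handled essentially as in the paper (same finite-direct-sum trick for $\Phi\Psi(\X)\subseteq\X$ via Theorem~\ref{theorem 1}, and the same test module for $S\subseteq\Psi\Phi(S)$: your $W=\Hom_{\hRm}(\hRm/\p, E_{\hRm}(\hRm/\m\hRm))$ is precisely the paper's $E_{\hRm/\p\hRm}(\hRm/\m\hRm)$). Where you diverge is in the Serre refinement, and your route is more uniform but slightly heavier. To show $\Psi(\X)$ is specialization closed, the paper avoids Matlis duality entirely: having already established $\X=\Phi\Psi(\X)$, it observes that for $\p\subseteq\q$ in $\Spec\hRm$ with $\p\in\Psi(\X)$ the module $E_{\hRm/\p\hRm}(\hRm/\m\hRm)$ lies in $\X$ (its sole attached prime $\p$ lies in $\Psi(\X)$), and then uses the inclusion of injective hulls $E_{\hRm/\q\hRm}(\hRm/\m\hRm)\hookrightarrow E_{\hRm/\p\hRm}(\hRm/\m\hRm)$ together with submodule-closure to conclude $\q\in\Psi(\X)$; your argument instead forms the $\m$-torsion slice $\X_\m$, verifies it is Serre in $\Art(\hRm)$, dualizes, and cites Theorem~\ref{Theorem B} componentwise. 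Both are valid, but the paper's is the more elementary and self-contained, while yours mechanizes the reduction to the noetherian theorem at the cost of checking that $\X_\m$ is genuinely a Serre subcategory. For the converse ($\Phi(S)$ Serre), the paper works over the single semi-local completion $\hRM=\varprojlim R/J_M^n$ attached to $M$ and applies Theorem~\ref{Theorem B} once there, whereas you decompose $N\hookrightarrow M$ into $\m$-torsion pieces and argue over each local $\hRm$ separately; since $\hRM\cong\prod_{\m\in\Ass_R M}\hRm$ these are the same computation packaged differently. One small point worth making explicit in your write-up: in the $\Phi(S)$ direction you need $S\cap\Spec\hRm$ to be specialization closed \emph{in order to invoke quotient-closure} of the Serre subcategory of $\mod(\hRm)$ it classifies --- closure under submodules and extensions alone would not suffice, because $\Ass$ of a quotient can grow --- and this is exactly where the incomparability of the components $\Spec\hRn$ is used.
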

%
%
\begin{proof}
Let $\X$ be a subcategory of $\ArtR$ which is closed under quotient modules and extensions. 
The subcategory $\Phi \Psi (\X )$ consists of all artinian $R$-modules $M$ with $\Att _{\hR} M \subseteq \cup _{X \in \X}\Att _{\hR} X$. 
It is clear that $\X$ is a subcategory of $\Phi \Psi (\X )$. 
Let $M$ be an artinian $R$-module with $\Att _{\hR} M \subseteq \cup _{X \in \X}\Att _{\hR} X$. 
For each ideal $\P \in \Att _{\hR} M$, there exists $X^{(\P)} \in \X$ such that $\P \in \Att _{\hR} X^{(\P)} $. 
Take the direct sums of such objects, say $X = \oplus _{\P \in \Att _{\hR} M} X^{(\P)}$, $X$ is also an object of $\X$. 
In fact, $\Att _{\hR} M$ is a finite set and $\X$ is closed under finite direct sums.  
It follows from the definition of $X$ that $\Att _{\hR} M \subseteq \Att _{\hR} X$. 
By virtue of Theorem \ref{theorem 1}, $M$ is contained in $\quotext _R (X)$, so that  in $\X$. 
Hence we have the equality $\X = \Phi \Psi (\X )$.

Let $S$ be a subset of $\coprod _{\n \in \max (R)} \Spec \ \hat{R_\n}$. 
It is trivial that the set $\Psi \Phi (S)$ is contained in $S$. 
Let $\p$ be a prime ideal in $S$. 
Take a maximal ideal $\m$ so that $\p$ is a prime ideal of $\hRm$. 
We consider an $\hRm$-module $E_{\hRm /\p \hRm }(\hRm / \m \hRm)$. 
Then we have the equality: 
$$
\Att _{\hRm } E_{\hRm /\p \hRm}(\hRm / \m \hRm) = \Ass _{\hRm } \hRm /\p \hRm = \{ \p \}.
$$
Note that $E_{\hRm}(\hRm / \m \hRm)$ is artinian as an $R$-module. 
Indeed, we have the equality $E_{\hRm}(\hRm / \m \hRm) = E_{R}(R / \m R)$ as $R$-modules (\cite[Theorem 18.6 (iii)]{M}). 
Since $E_{\hRm /\p \hRm }(\hRm / \m \hRm)$ is an $\hRm$-submodule (thus an $R$-submodule) of $E_{\hRm}(\hRm / \m \hRm)$, it is an artinian $R$-module. 
Hence $E_{\hRm /\p \hRm}(\hRm / \m \hRm)$ is an artinian $R$-module which is a $\p$-secondary $\hRm$-module. 
Consequently, $E_{\hRm /\p \hRm }(\hRm / \m \hRm)$ belongs to $\Phi (S)$, so that $\p \in \Psi \Phi (S)$.

\vspace{8pt}
Suppose that $\X$ is a Serre subcategory of $\ArtR$. 
Let $\p$ be a prime ideal of $\hRm$ which is contained in $\Psi (\X )$.    
Chose $\q \in \Spec \ \hRm$ such that $\p \subseteq \q$. 
Then we have the inclusion of $\hRm$-modules (hence, of $R$-modules):
$$
0 \to E_{\hRm /\q \hRm}(\hRm/ \m \hRm) \to E_{\hRm /\p \hRm}(\hRm/ \m \hRm). 
$$
Since $E_{\hRm /\p \hRm}(\hRm/ \m \hRm)$ is an artinian $R$-module which is a $\p$-secondary $\hRm$-module, $E_{\hRm /\p \hRm}(\hRm/ \m \hRm)$ is contained in $\X$. 
Thus $E_{\hRm /\q \hRm}(\hRm/ \m \hRm)$ is also in $\X$ since $\X$ is closed under submodules. 
Hence we have that $\q \in \Psi (\X )$, so that $\Psi (\X )$ is closed under specialization. 

On the other hand, let $S$ be a specialization closed subset of $\coprod _{\n \in \max (R)} \Spec \ \hat{R_\n}$. 
We shall show $\Phi (S)$ is a Serre subcategory. 
Since $\Phi (S)$ is closed under quotient modules and extensions, we sufficiently show that it is closed under submodules. 
Let $M$ be in $\Phi (S)$ and $N$ be an $R$-submodule of $M$. 
Set $J_M = \cap _{\m \in \Ass _{R} M}\m$ and $\hRM = \varprojlim R/J_{M}^n$. 
Then $M$ is an artinian $\hRM$-module and $N$ is also an artinian $\hRM$-submodule of $M$ (Lemma \ref{Sharp}). 
Since $\hRM$ is a complete semi-local ring, Matlis duality is allowed. 
By using Matlis duality, we can show that $N^{\vee}$ is contained in the Serre subcategory generated by $M^{\vee}$, where $(-)^\vee = \Hom _{\hRM}(- , E_{\hRM}(\hRM/J_{M}\hRM))$. 
Thus, by Theorem \ref{Theorem B}, $\Ass _{\hRM} N^{\vee}$ is in $\cup _{\p \in \Ass _{\hRM} M^{\vee}} V(\p)$. 
Since we have the equalities $\Att _{\hR} N = \Att _{\hRM} N = \Ass _{\hRM} N^{\vee}$, and $\cup _{\p \in \Ass _{\hRM} M^{\vee}} V(\p) = \cup _{\p \in \Att _{\hRM} M} V(\p) \subseteq S$, $\Att _{\hR} N$ is contained in $S$. 
Therefore $N$ is in $\Phi (S)$. 

\end{proof}

\subsection*{Acknowledgments}
The author express his deepest gratitude to Ryo Takahashi and Yuji Yoshino for valuable discussions and helpful comments.


\end{document}